\documentclass[11pt, reqno]{amsart}

\usepackage{amsfonts}
\usepackage{amssymb}
\usepackage{amsthm}
\usepackage{amsmath}
\usepackage{enumerate}

\usepackage{graphicx}

\newtheorem{thm}{Theorem}[section]
\newtheorem{cor}[thm]{Corollary}

\newtheorem{lemma}[thm]{Lemma}

\newtheorem{rmk}[thm]{Remark}

\newtheorem{prop}[thm]{Proposition}
\newtheorem{conj}[thm]{Conjecture}
\newtheorem{question}[thm]{Question}

\numberwithin{equation}{section}
\begin{document}
\title{Minimal equatorial fillings}
\author{Jacob Bernstein}\address{Johns Hopkins University \\ 3400 N. Charles St, Baltimore MD 21218 USA}
	\email{jberns15@jhu.edu}
\author{Daniel Ketover}\address{Rutgers University\\  Busch Campus - Hill Center \\ 110 Freylinghausen Road, Piscataway NJ 08854 USA} \email{dk927@math.rutgers.edu}

\begin{abstract}
We show that a great circle in $\mathbb{S}^3$ bounds embedded non-orientable minimal surfaces of unbounded genera and all possible non-zero Euler numbers.  This answers a question of R. Hardt and H. Rosenberg (1990). It also gives the first example of a real analytic Jordan curve bounding infinitely many embedded minimal surfaces of distinct topological types.  Such surfaces may arise as the links of boundary singularities of non-orientable minimal hypersurfaces in $\mathbb{R}^4$.  
\end{abstract}
\maketitle
\section{Introduction}

By the monotonicity formula, the singularities of minimal hypersurfaces are modeled on cones whose links are minimal hypersurfaces in the round sphere. In the same way, boundary singularities of minimal hypersurfaces are modeled on cones over minimal hypersurfaces with equatorial boundary in the round sphere.

In 1979, R. Hardt and L. Simon (Theorem 11.1 in \cite{HS}) obtained the following result:
\begin{thm}[Hardt-Simon (1979)]\label{orientable}
The equatorial $\mathbb{S}^{n-2}\subset\mathbb{S}^n$ bounds no embedded orientable minimal hypersurface aside from minimal hemispheres.
\end{thm}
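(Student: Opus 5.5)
The plan is to combine the maximum principle with the one-parameter family of minimal hemispheres bounded by the equator. Write $\Gamma=\{x_n=x_{n+1}=0\}\cap\mathbb{S}^n$. For $\theta\in\mathbb{R}/2\pi\mathbb{Z}$, let
\[
H_\theta=\{x\in\mathbb{S}^n : (x_n,x_{n+1})=r(\cos\theta,\sin\theta),\ r\ge 0\}.
\]
These are totally geodesic hemispheres with $\partial H_\theta=\Gamma$, and they foliate $\mathbb{S}^n\setminus\Gamma$. This gives a smooth circle-valued "angle" function $\Theta:\mathbb{S}^n\setminus\Gamma\to\mathbb{S}^1$.

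Let $\Sigma$ be an embedded orientable minimal hypersurface with $\partial\Sigma=\Gamma$, smooth up to the boundary and (without loss of generality) connected.

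\textbf{Step 1: $\Theta|_{\Sigma^\circ}$ lifts to a real-valued function.} The class $d\Theta$ generates $H^1(\mathbb{S}^n\setminus\Gamma)$. Its period on a loop $\gamma$ equals $2\pi$ times the linking number of $\gamma$ with $\Gamma$, i.e. the intersection number of $\gamma$ with any chain bounded by $\Gamma$.

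Now take a loop $\gamma\subset\Sigma^\circ$. Since $\Sigma$ is orientable and embedded, it has a global unit normal $\nu$. Pushing $\gamma$ slightly along $\nu$ gives a loop $\gamma'$ that is homotopic to $\gamma$ in $\mathbb{S}^n\setminus\Gamma$ and disjoint from $\Sigma$. Using $\Sigma$ itself as the chain bounded by $\Gamma$, the linking number of $\gamma'$ with $\Gamma$ is $0$. Hence $\int_\gamma d\Theta=0$, and $\Theta$ lifts to $\tilde\Theta:\Sigma^\circ\to\mathbb{R}$.

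This is exactly where orientability enters; the non-orientable examples of the paper must wind around $\Gamma$.

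\textbf{Step 2: extend $\tilde\Theta$ to $\overline\Sigma$.} At each $p\in\Gamma$, $\Sigma$ is a smooth hypersurface with boundary. Its inward conormal lies in the normal plane of $\Gamma$, so it points along some $H_{\theta(p)}$. Then $\tilde\Theta$ extends continuously to $\Gamma$ by $\theta(p)$, lifted consistently. I expect this to need a short Taylor-expansion check near $\Gamma$, using that $\Theta$ is the polar angle in the $(x_n,x_{n+1})$-plane.

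By compactness, $\tilde\Theta$ attains its maximum $\theta_0$ on $\overline\Sigma$. At the maximum point, $\Sigma$ lies locally on one side of the minimal hypersurface $H_{\theta_0}$ and touches it.

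\textbf{Step 3: maximum principle.}
\begin{itemize}
\item If the maximum is attained at an interior point, the interior strong maximum principle for minimal hypersurfaces gives that $\Sigma$ coincides with $H_{\theta_0}$ near that point. Unique continuation and connectedness then give $\Sigma=H_{\theta_0}$.
\item If the maximum is attained only at some $p\in\Gamma$, then $\Sigma$ and $H_{\theta_0}$ share the boundary $\Gamma$ near $p$, are tangent at $p$, and $\Sigma$ lies on one side. The Hopf boundary-point version of the maximum principle then forces $\Sigma=H_{\theta_0}$ near $p$, and again everywhere.
\end{itemize}

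I expect the main obstacle to be the boundary analysis in Steps 2–3. One must justify that $\tilde\Theta$ behaves well up to $\Gamma$, and that one-sidedness near a boundary maximum makes the boundary maximum principle applicable. The natural way is to write $\Sigma$ and $H_{\theta_0}$ locally as graphs over the common tangent half-space at $p$. Their difference then satisfies a linear elliptic equation on a half-ball, vanishes on the flat boundary, and has a sign, so the Hopf lemma applies.
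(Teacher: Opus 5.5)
The paper gives no proof of this statement. It is quoted from Hardt--Simon (Theorem 11.1 of \cite{HS}) and later invoked for $\mu_{p,q}$, so there is no internal argument to compare against. Your proposal stands on its own as a correct proof for compact embedded hypersurfaces that are $C^2$ up to the boundary.

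Step 1 is right. Two-sidedness lets you push loops off $\Sigma$, and since the oriented $\Sigma$ is a chain bounded by $\Gamma$, push-offs have zero linking number with $\Gamma$. Hence $d\Theta$ has no periods on $\Sigma^\circ$.

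The steps you flag as obstacles are routine.
\begin{enumerate}
\item \emph{Continuity at $\Gamma$.} In Fermi coordinates, a point of $\Sigma$ at distance $t$ from $q\in\Gamma$ has normal-plane component $t\eta(q)+O(t^2)$, so $\Theta=\theta(q)+O(t)$.
\item \emph{Boundary touching.} In normal coordinates at $p$, both $\Gamma$ and $H_{\theta_0}$ are linear because they are totally geodesic. So $\Sigma$ is a graph $u$ over a genuine half-ball in $T_pH_{\theta_0}$, with $u=0$ on the flat face, $u$ of one sign, and $Du(p)=0$ by tangency. The linear equation satisfied by $u$ has a zeroth-order coefficient of no particular sign. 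This is harmless because the extremal value is $0$, and Hopf then forces $u\equiv 0$.
\item \emph{Interior touching.} Analyticity only gives $\Sigma\subset H_{\theta_0}\cup H_{\theta_0+\pi}$. A short topological remark then picks out $H_{\theta_0}$.
\item \emph{Connectedness.} The same lifting argument applied to a closed component would force it to be an entire great sphere through $\Gamma$, which embeddedness forbids. So your reduction to connected $\Sigma$ is justified.
\end{enumerate}

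Compared with the bare citation, your route makes transparent exactly where orientability enters. For $n=3$, the boundary values of $\tilde{\Theta}$ are a lift of the conormal angle along $C$, whose winding number is $e(\Sigma)$. So Step 1 contains the paper's remark that orientable fillings have $e(\Sigma)=0$. The surfaces $\tilde{\xi}_{k,m}$, with $e=2(k+1)\neq 0$, escape the argument already along the boundary, where $\Theta$ admits no lift.

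The price is that you use full interior smoothness. To apply the statement to links of boundary tangent cones of area-minimizing currents, which need not be smooth in high dimensions, the interior touching step would need a maximum principle for singular stationary hypersurfaces. For the smooth statement as quoted in the paper, nothing further is required.
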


Using Theorem \ref{orientable}, Hardt-Simon were able to rule out boundary singularities for solutions to the oriented Plateau problem.  

Given a codimension-two \emph{non-orientable} submanifold of Euclidean space, however, there may be no smooth solution to the corresponding unoriented Plateau problem for topological reasons. For instance, the manifold $\mathbb{RP}^2$ is not the boundary of any smooth three-manifold. As one may embedded $\mathbb{RP}^2$ into $\mathbb{R}^4$, it follows that there cannot be any regular solution to the Plateau problem (among mod $2$ flat chains) bounded by the image.  Since interior regularity of area minimizers has been established (W. Fleming \cite{Fleming2}, F. Almgren, R. Schoen and L. Simon \cite{ASS}) this implies that a putative minimizer must have a singularity at its boundary $\mathbb{RP}^2$.  As the links of such singularities are embedded minimal hypersurfaces in $\mathbb{S}^3$ with equatorial boundary it is a natural problem to understand and classify these objects.   

In 1970, B. Lawson \cite{LS} discovered an embedded minimal M\"obius band in $\mathbb{S}^3$ with boundary a great circle.  White \cite{LectureNotesOtis} proposed the cone over this M\"{o}bius band as an explicit example of a non-orientable boundary singularity of an area-minimizer\footnote{During the final preparation of this manuscript Guaraco-Parise \cite{guaracoWhitesConeMobius2026} showed the cone over the Lawson band (defined in \eqref{signs}) is area-minimizing.}. R. Hardt and H. Rosenberg \cite{HR} asked the following:
\begin{question}[R. Hardt and H. Rosenberg (1990)]
Is the Lawson M\"obius band the unique embedded non-orientable minimal surface in $\mathbb{S}^3$ with boundary a great circle?
\end{question}

In this paper, we answer their question negatively by constructing a two-parameter family $\tilde{\xi}_{k.m}$ of such surfaces.  In several respects the surfaces $\tilde{\xi}_{k.m}$ are non-orientable and twisted analogs to the closed orientable Lawson surfaces $\xi_{k,m}\subset\mathbb{S}^3$ found in 1970 \cite{LS}. 

If $\Sigma\subset\mathbb{S}^3$ is an embedded surface bounded by the great circle $C$, its \emph{Euler number}\footnote{The reason for the terminology is that if we cap $\Sigma$ off by a disk in $B^4$, this integer coincides with the Euler number associated to the (twisted) Euler class of the resulting closed surface's normal bundle in $\mathbb{R}^4$ (see the discussion in Section 2 in \cite{Conway}). In the knot theory literature, this integer is sometimes called the \emph{boundary slope}.}  $e(\Sigma)$ is the even integer $k$ such that, if $T(C)$ is a small tubular neighborhood of $C$, then $\Sigma \cap \partial T(C)$ is a $(1,k)$ torus knot in $\partial T(C)$.  When $\Sigma$ is orientable, $e(\Sigma)=0$.

The following is our main result:

\begin{thm}\label{main}
Fix $k\in\mathbb{N}$.  For each odd $m\in\mathbb{N}$ at least $3$, there exists an embedded non-orientable minimal surface $\tilde{\xi}_{k,m}\subset\mathbb{S}^3$ so that the following hold:
\begin{enumerate}[(i).]
\item $\partial\tilde{\xi}_{k,m}$ is a great circle. \label{item1}
\item $\mbox{genus}(\tilde{\xi}_{k,m})= mk+1$ and $e(\tilde{\xi}_{ k,m})=2(k+1)$.\label{genus}
\item $\mbox{Area}(\tilde{\xi}_{k,m})< \mbox{Area}(\overline{\tau}_{1,2(k+1)})$. \label{item3}
\item The symmetry group of $\tilde{\xi}_{k,m}$ is $\mathbb{D}_{m(k+1)}\subset SO(4)$. \label{item4}
\item $\tilde{\xi}_{k,m}$ contains $m(k+1)$ half-geodesics meeting $\partial\tilde{\xi}_{k,m}$  orthogonally in $2m(k+1)$ equally spaced points. \label{item5}
\item For any sequence $m_i\to\infty$ there holds \begin{equation}\lim_{i\to\infty} \tilde{\xi}_{k,m_i}= \overline{\tau}_{1,2(k+1)}\mbox{ in the sense of varifolds}.\end{equation} \label{item6}
\end{enumerate}
\end{thm}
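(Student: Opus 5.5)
The plan is to run a Lawson-type construction: solve a Plateau problem for a geodesic polygon inside a fundamental tetrahedron, then extend the solution by Schwarz reflection. The polygon is built from the great circle $C$ and its orthogonal circle $C^\perp$. Fix $m$ odd and $k$, and put $N=m(k+1)$.

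On $C^\perp$ I mark $2(k+1)$ equally spaced points; on $C$ I mark $2N$ equally spaced points. The twisting is built in by pairing the points so that, as one goes once around $C$, the surface turns $2(k+1)$ times around $C$ in the normal direction. That is the source of the Euler number $2(k+1)$, and it mimics the $\overline{\tau}_{1,2(k+1)}$ surface: I take $\overline{\tau}_{1,2(k+1)}$ to be the cone-like surface swept by half great circles from $C$ that rotate $2(k+1)$ times, with boundary $C$. Concretely, I choose a geodesic tetrahedron $T$ whose vertices are two adjacent marked points on $C^\perp$ and two adjacent marked points on $C$. In $T$ I take the closed geodesic quadrilateral $\Gamma$ made of one edge on $C$ and three further edges: the two half-geodesic edges joining $C$ to $C^\perp$, and one edge on $C^\perp$. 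Unlike Lawson's $\xi_{k,m}$, one edge of $\Gamma$ lies on $C$ itself and will not be reflected across; it becomes free boundary. The oddness of $m$ is used so that the reflected pieces close up with a single boundary circle, rather than two, and with non-orientable combinatorics.

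The main steps, in order, are as follows.

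First, since $T$ is mean-convex (its faces are totally geodesic), Morrey/Meeks--Yau solve the Plateau problem for $\Gamma$. This gives an embedded area-minimizing disk $D\subset T$, and by the Radó-type graph argument used by Lawson it is unique and a graph over a suitable great sphere, so its interior lies strictly inside $T$.

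Second, I reflect $D$ across its geodesic edges not lying on $C$, i.e. by rotations of angle $\pi$ about those edges. The reflections generate a group $G$ containing $\mathbb D_N$. The union $\tilde\xi_{k,m}=G\cdot D$ is real analytic across the reflected edges by Schwarz reflection. Its boundary is the union of the copies of the edge on $C$, which is exactly $C$; this gives (i).

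Third, embeddedness follows because the images $gT$ have disjoint interiors, together with a local check at the vertices that the angles of $\Gamma$ tile correctly. The edges from $C$ to $C^\perp$ meet $C$ orthogonally, giving $2N$ endpoints and $N$ half-geodesics; this is (v).

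Fourth, for the topology I count faces, edges and vertices of the resulting cell complex and obtain $\chi$, hence the genus $mk+1$. Non-orientability comes from the oddness of $m$, which forces an orientation reversal along some loop. For the Euler number I examine a small torus around $C$: each boundary arc contributes a fixed amount of twisting, and summing gives a $(1,2(k+1))$ curve. This gives (ii).

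For (iii), I compare areas piece by piece. In each tetrahedron the piece of $\overline{\tau}_{1,2(k+1)}$ is a competitor spanning a polygon homologous to $\Gamma$, or $\Gamma$ itself up to a degenerate comparison. Strict minimality of $D$ then gives the strict inequality after multiplying by $|G|$.

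For (iv), the inclusion $\mathbb D_N\subset\mathrm{Sym}$ is clear from the construction. For the reverse, any symmetry must preserve $C$ and the set of $2N$ singular-direction points, namely the umbilics and the endpoints of the straight lines. This forces the symmetry into $\mathbb D_N$, by the argument Lawson uses for the symmetry of the $\xi_{k,m}$.

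For (vi), the areas are bounded by (iii), so Allard compactness applies. As $m\to\infty$ the tetrahedra become thin slivers along $C^\perp$ directions, and the disk $D$ converges to the corresponding sliver of the twisted surface. Any limit varifold is stationary and invariant under the full limiting $\mathbb S^1$ symmetry group, with boundary $C$. An $\mathbb S^1$-invariant stationary varifold of this kind must be $\overline{\tau}_{1,2(k+1)}$, and the area bound rules out multiplicity.

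I expect the main obstacle to be the third and fourth steps together: choosing $\Gamma$ so that the reflections close up into an embedded surface with a single boundary circle, and verifying the genus and Euler number from the combinatorics. This is where the parity of $m$ must enter. I would first work out the case $k=0$, where the construction should recover Lawson's band generalizations, and then insert the extra $k$ rotations.
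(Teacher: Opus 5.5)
There is a genuine gap: the fundamental contour you chose cannot produce the surfaces in the theorem.

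\textbf{Your quadrilateral forces genus $1$.} The contour $P_1P_2Q_2Q_1$ (one arc of $C$, one arc of $C^\perp$, two joining half-geodesics) has all four interior angles equal to $\pi/2$, because the geodesics $\gamma_{\theta,\phi}$ meet both $C$ and $C^\perp$ orthogonally. Gauss--Bonnet then gives $\int_D K=2\pi-4\cdot\frac{\pi}{2}=0$. Hence any surface assembled from copies of $D$ whose boundary is the geodesic $C$ has $2\pi\chi=\int K=0$. It is a M\"obius band, of genus $1$, not $mk+1$.

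\textbf{Your quadrilateral only reproduces a Lawson surface.} The contour spans the ruled piece $\{(e^{i\theta}\sin t,e^{ic\theta}\cos t)\}$ of a Lawson surface, where $c$ is the ratio of the two arc lengths. With your marking $c=m$, not $2(k+1)$. So by the uniqueness you invoke, $D$ is that ruled piece, and your reflections rebuild Lawson's immersed $\tau_{1,m}$. This fails the theorem in two further ways:
\begin{enumerate}
\item Each arc of $C^\perp$ is an edge of $2m$ copies of $D$, so $m$ sheets cross along $C^\perp$ and embeddedness fails there. Your vertex check does not detect this.
\item Since $m$ is odd, the $\pi$-rotation $(z,w)\mapsto(-z,w)$ about the $C^\perp$ edge puts a copy of $D$ on the opposite side of each arc of $C$. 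That copy spans the same contour as the Schwarz reflection $(z,w)\mapsto(z,-w)$ across $C$ of another copy. So $C$ ends up in the interior, and (i) fails.
\end{enumerate}
Correcting the marking to $m$ points on $C^*$ does not help. Then $B_0A_0A_1B_1$ bounds the face $S_1$ of $\overline{\tau}_{1,2(k+1)}$ itself, and you recover the immersed band.

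\textbf{The missing idea is to desingularize the crease of $\overline{\tau}_{1,2(k+1)}$ along $C^*$.}
\begin{itemize}
\item \emph{The contour.} The paper uses the hexagon $\Gamma=B_0A_0A_1B_1A_{1+m}A_mB_0=\partial(S_1\cup S_2)$. It comes from two adjacent ruled faces meeting along $B_0B_1\subset C^*$ at angle $\pi/(k+1)$, with their shared edge deleted. Its two corners of angle $\pi/(k+1)$ on $C^*$ give $\int_{\mathcal P}K=-2\pi k/(k+1)$, hence genus $mk+1$ after $m(k+1)$ copies.
\item \emph{The area bound.} The creased competitor $S_1\cup S_2$ gives the strict inequality (iii). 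With your quadrilateral this comparison would be an equality.
\item \emph{The Plateau problem.} It is solved in the mean-convex chamber $\overline{W}_0^1$ of $W_0\setminus\overline{\tau}_{1,2(k+1)}$, not in a totally geodesic tetrahedron. It is solved equivariantly under the involution $\iota$, which acts freely on $\Gamma$ (Meeks--Yau). The paper uses no uniqueness or graph property.
\item \emph{The reflections and closing up.} One reflects only across the half-geodesic edges, which pair up into $k+1$ diameters of each $H_i$. The $\iota$-symmetry shows that each Schwarz reflection acts on $\mathcal{Q}_i$ as the fixed screw motion $t=\sigma_{\frac{2\pi}{m}}\circ\rho_{\frac{2\pi(m+1)}{2m(k+1)}}$. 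After $m$ lenses, $t^m=\rho_{\frac{2\pi}{k+1}}^{\,(m+1)/2}$, which preserves $\mathcal{Q}_1$ precisely because $m$ is odd.
\end{itemize}
This is where parity enters, rather than through an orientation-reversal count. Without the $\iota$-equivariance there is no reason for the reflected surface to close up after going once around $C$.
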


The surfaces $\overline{\tau}_{1,2(k+1)}$ referenced in item (\ref{item6}) are halves of Lawson's immersed Klein bottles discussed in the next section.

Specializing Theorem \ref{main} to $k=1$ we find that all even genera at least four are realized as minimal embeddings:
\begin{cor}
For each even $m\geq 4$, the embedded non-orientable minimal surface $\tilde{\xi}_{1,m-1}$ has genus $m$ and Euler number $4$.
\end{cor}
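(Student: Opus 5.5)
This corollary is an immediate specialization of Theorem \ref{main}, so the plan is just to substitute the parameters and check the hypotheses. Take $k=1$ and, given an even integer $m\geq 4$, set $m' = m-1$. Then $m'$ is odd and $m'\geq 3$, which is exactly the range of the second index allowed in Theorem \ref{main}. Hence the theorem applies and gives an embedded non-orientable minimal surface $\tilde{\xi}_{1,m'} = \tilde{\xi}_{1,m-1}\subset\mathbb{S}^3$ whose boundary is a great circle, by item (\ref{item1}).

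Next read off the invariants from item (\ref{genus}) with $k=1$ and odd parameter $m'$. The genus is $m'k+1 = (m-1)\cdot 1 + 1 = m$, and the Euler number is $e = 2(k+1) = 4$, as claimed.

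Finally, as $m$ runs over the even integers $\geq 4$, the parameter $m'=m-1$ runs over all odd integers $\geq 3$, so every even genus at least four is realized. There is no genuine obstacle here. The only point needing care is the index bookkeeping: the odd parameter of Theorem \ref{main} is $m-1$, and the genus formula $mk+1$ must be evaluated at that shifted value, not at $m$.
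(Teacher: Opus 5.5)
Your proposal is correct and is exactly the paper's argument: the corollary is stated as the specialization of Theorem \ref{main} to $k=1$ with odd parameter $m-1\geq 3$, giving genus $(m-1)\cdot 1+1=m$ and Euler number $2(1+1)=4$.
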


We also get that nearly all genera are attained:
\begin{cor}
If $g\neq 2^N+1$ for all $N\geq 0$, then there exists an embedded non-orientable minimal surface in $\mathbb{S}^3$ of genus $g$ and boundary a great circle. 
\end{cor}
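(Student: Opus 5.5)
The plan is to read off the possible genera from item (\ref{genus}) of Theorem \ref{main} and reduce the corollary to an elementary divisibility fact. Theorem \ref{main} produces, for every $k\in\mathbb{N}$ (so $k\ge 1$) and every odd $m\ge 3$, an embedded non-orientable minimal surface $\tilde{\xi}_{k,m}\subset\mathbb{S}^3$ whose boundary is a great circle and whose genus is $mk+1$. So it suffices to show that every admissible $g$ is either of the form $mk+1$ with $k\ge1$ and $m\ge3$ odd, or is realized some other way.

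Small genera are handled separately. A non-orientable surface has genus at least $1$, so $g\ge 1$. For $g=1$, Lawson's M\"obius band bounded by a great circle has non-orientable genus one, and $1\neq 2^N+1$ for every $N\ge 0$. The values $g=2=2^0+1$ and $g=3=2^1+1$ are excluded by the hypothesis, so from now on $g\ge 4$.

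For $g\ge 4$, put $n=g-1\ge 3$. The hypothesis $g\neq 2^N+1$ says exactly that $n$ is not a power of $2$. Write $n=2^a m$ with $a\ge 0$ and $m$ odd. Since $n$ is not a power of two, $m\neq 1$, so $m\ge 3$. Taking $k=2^a\ge 1$, Theorem \ref{main} gives the surface $\tilde{\xi}_{k,m}$ of genus $mk+1=n+1=g$ with great-circle boundary, which proves the corollary.

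There is no real obstacle here. The only points needing care are the genus convention (non-orientable genus, with the Lawson band having genus $1$) and checking that $k=1$, i.e. $a=0$, is allowed because $k$ ranges over all of $\mathbb{N}$. These same facts explain why the values $2^N+1$ are not reached: $g-1=2^N$ has no odd factor $m\ge 3$.
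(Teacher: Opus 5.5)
Your proof is correct and is essentially the argument the paper intends; the paper states this corollary without proof, as an immediate consequence of item (\ref{genus}) of Theorem \ref{main}. Writing $g-1=2^a m$ with $m\geq 3$ odd and taking $\tilde{\xi}_{2^a,m}$ gives genus $g$, and the embedded Lawson band $\overline{\tau}_{1,2}$ covers the remaining case $g=1$.
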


To the authors' knowledge, Theorem \ref{main} gives the first example of a real analytic Jordan curve bounding infinitely many embedded minimal surfaces of distinct topological types.  In 1950, R. Courant \cite{C} described the existence of a Jordan curve with one singular point bounding infinitely many minimal surfaces (later made rigorous by W. Fleming \cite{Fleming}).\footnote{Morgan (\cite{M1} (1980)) also found a \emph{disconnected} curve in $\mathbb{R}^3$ with four components bounding minimal surfaces of arbitrarily high genera.  See also J. Pitts and H. Rubinstein (\cite{PR} (1987)) for an example in $\mathbb{R}^3$ with three components.  A sketch going back to J. Pitts suggests two orthogonal linked circles in $\mathbb{R}^3$ might also bound infinitely many minimal surfaces.} In 1976, F. Morgan \cite{M2} found a real analytic closed curve in $\mathbb{R}^4$ bounding a continuum of minimal hypersurfaces of the same topological type.  Recently C. De Lellis, G. De Philippis and J. Hirsch \cite{DDH} found an example of a smooth curve in dimension four bounding a minimal surface of infinite genus. All of these works consider area-minimizing surfaces, while the surfaces in Theorem \ref{main} are unstable (but equivariantly minimizing with respect to suitable group actions).

The surfaces $\tilde{\xi}_{k,m}$ are analogous to the orientable (closed) Lawson surfaces $\xi_{k,m}$ in several ways (which is the reason for the notation).  The latter converge in the sense of varifolds as $m\to\infty$ to a stationary integral varifold $V_k$ consisting of $k+1$ equally spaced great spheres intersecting along a great circle with density $k+1$. Similarly, the limit $W_k$ of $\tilde{\xi}_{k,m}$ as $m\to\infty$ is smooth away from a great circle where it has density $k+1$.  In a tubular neighborhood $T$ of this circle, $V_k$ is union of Seifert fibers\footnote{Recall that if we consider $D^2\times I$ fibered by the segments $\{x\}\times I$, and we identify the top and bottom faces with a twist angle of $2\pi q/p$ then we obtain a fibered solid torus with Seifert fiber data $(p,q)$. The core curve $\{0\}\times I$ in the quotient solid torus is called the \emph{singular} fiber, and the other \emph{regular} fibers wrap around it.} of $T$ with fibering data $(1,0)$.    The varifold $W_k$ restricted to $T$, on the other hand, is also union of Seifert fibers but instead with fibering data $(2(k+1),1)$.  It is for this reason that we call the surfaces $\tilde{\xi}_{k,m}$ ``twisted."  While $\tilde{\xi}_{k,m}$ only comprises half of a closed minimal surface, the area of $\tilde{\xi}_{k,m}$ approaches that of $k+1$ great spheres when $k$ and $m$ are sufficiently large, which is the same behavior as the Lawson surfaces.  As $m\to\infty$, the Lawson surfaces  are modeled by Scherk's singly-periodic minimal surfaces found in 1834, while the $\tilde{\xi}_{k,m}$ are modeled (near the polar geodesic to their boundary) by the twisted Scherk surfaces discovered by H. Karcher \cite{Karcher} in 1988.  

It is natural to ask what possible combinations of Euler numbers and genera can be realized by minimal embeddings with boundary an equator.  By results of H. Whitney \cite{Whitney} and W. Massey \cite{Ma} (collected in Theorem \ref{admissible}), not all pairs of Euler numbers and genera can even be realized by \emph{smooth embeddings} in $\mathbb{S}^3$ with boundary an unknot.  In fact, the surfaces obtained in Theorem \ref{main} do not realize the infimal possible genus\footnote{We exhibit these genus-minimizing configurations in Theorem \ref{admissible}.} given their Euler number. For example, Euler number $4$ permits a smooth embedding of a genus $2$ surface (i.e. a punctured Klein bottle) but Theorem \ref{main} produces a minimal embedding $\hat{\xi}_{1,3}$ with smallest possible genus $4$ (i.e. a punctured Klein bottle with a handle attached). 

To obtain infimal genus surfaces given their Euler numbers (and also to realize the few genera missed in Theorem \ref{main}) one would need to extend Theorem \ref{main} to the case $m=1$.  In this borderline case, however, equivariant minimization under the corresponding symmetry group appears to produce a hemisphere with $k+1$ crosscaps collapsing at equally distributed points on its boundary.

Let us describe the construction of the surfaces $\tilde{\xi}_{k,m}$. Lawson \cite{LS} found two families of minimal surfaces in $\mathbb{S}^3$ by solving the Plateau problem for a four sided geodesic polygon and reflecting to obtain a closed surface.   Later, H. Karcher, U. Pinkall and I. Sterling \cite{KPS} found closed embedded minimal surfaces in $\mathbb{S}^3$ associated to the Platonic solids by first finding a free boundary minimal surface in a fundamental domain and then reflecting.  Our method combines both approaches. To construct the surfaces $\tilde{\xi}_{k,m}$ we solve the Plateau problem for a six sided geodesic polygon and apply successive Schwarz reflections to obtain a surface with boundary a great circle. While doing this, we need to impose an additional involutive symmetry on each patch.  A result of Meeks-Yau \cite{MYD} allows one to solve such an equivariant Plateau problem.  We note that six sided geodesic polygons were considered by J. Choe and M. Soret \cite{CS2} in producing a desingularization of several Clifford tori in $\mathbb{S}^3$ meeting at equal angles.  The surfaces $\tilde{\xi}_{k,m}$, like those of Choe-Soret, have no reflective symmetries.

In fact, the existence of non-orientable minimal surfaces with boundary a great circle follows from topological considerations in lens spaces.  If $p$ is odd, it follows from the Universal Coefficient Theorem that the group $H_1(L(p,q),\mathbb{Z}_2)$ is trivial while $H_1(L(p,q),\mathbb{Z})\cong\mathbb{Z}_p$.  Thus if we project a great circle in $\mathbb{S}^3$ to get a closed embedded curve $\gamma\subset L(p,q)$ generating $\pi_1(L(p,q))$ and solve for a mod $2$ flat chain with infimal area and boundary $\gamma$, we necessarily obtain a non-orientable minimal surface. This surface lifts to a smooth embedded minimal surface $\mu_{p,q}$ (though its genus is not specified).  In Section \ref{lenssection}, however, we show that the $\tilde{\xi}_{k,m}$ surfaces for appropriate $m$ and $k$ (together with their reflections through great spheres) indeed descend to the lens spaces $L(p,q)$ and thus are likely candidates for the minimal surfaces obtained from geometric measure theory. 

There is an analogous and classical situation for lens spaces $L(p,q)$ with even $p$.  Indeed, if $p=2k$, since $H_2(L(2k,q),\mathbb{Z}_2)\cong\mathbb{Z}_2$ but $H_2(L(2k,q),\mathbb{Z})\cong 0$  there exist homologically non-trivial non-orientable surfaces in $L(2k,q)$.  In 1969, G.E. Bredon and J.W. Wood \cite{Bre} determined the infimal genus of such a surface that can be realized by an embedding in terms of data from the iterated fraction expansion of the ratio $p/q$.

Finally we show in Section \ref{symmetrysection} that only cyclic and dihedral groups can occur as symmetry groups for minimal surfaces with boundary a great circle.  This raises the question of whether the constructions in this paper exhaust all examples. 

The organization of this paper is as follows.  In Section \ref{immersecsection} we introduce the immersed Lawson M\"obius bands. In Section \ref{mainsection} we prove Theorem \ref{main}.  In Section \ref{lenssection} we consider finite free actions on $\mathbb{S}^3$ and the resulting minimal surfaces we obtain by lifting surfaces in lens spaces.  In Section \ref{admissiblesection} we discuss the restrictions on Euler number and genus for non-orientable embedded surfaces with boundary an unknot due to Whitney and Massey. In Section \ref{symmetrysection}  we consider symmetry of minimal surfaces with boundary a great circle.  In Section \ref{questionsection} we collect questions related to constructions in this paper. 
\newline
\newline
\emph{Acknowledgements:} J.B was partially supported by the NSF grant DMS-2203132.  D.K. was partially supported by the NSF grant DMS-2405114.

\section{The immersed Lawson M\"obius bands}\label{immersecsection}
In 1970, Lawson discovered a family of Klein bottles and tori $\tau_{m,k}$ minimally immersed in $\mathbb{S}^3$ given by the formula
\begin{equation}
F_{m,k}(x,y):\mathbb{R}^2\to\mathbb{S}^3\subset\mathbb{R}^4
\end{equation}
\begin{equation}\label{signs}
F_{m,k}(x,y) = (\cos(m x)\sin(y),\sin(m x)\sin(y),\cos(kx)\cos(y), \sin(kx)\cos(y)),
\end{equation}
Here $k$ and $m$ are relatively prime integers.  If $m$ and $k$ are both odd, then $\tau_{m,k}$ is an immersed minimal torus.  If either $m$ or $k$ is even, then $\tau_{m,k}$ is an immersed minimal Klein bottle.  

If $k$ is even, the image of the map $F_{1,k}$ restricted to $\mathcal{D}:=[0,\pi]\times [-\pi/2,\pi/2]$ spans an immersed M\"obius band that we denote $\overline{\tau}_{1,k}$. The boundary of $F_{1,k}$ is parameterized at $y=\pm\frac{\pi}{2}$. 
Writing
\begin{equation}
\mathbb{S}^3=\{(z,w)\in\mathbb{C}^2\;|\; |z|^2+|w|^2=1\}, 
\end{equation}
we can express the map \eqref{signs}
\begin{equation}\label{param}
F_{1,k}(x,y)= (e^{ix}\sin y,e^{ikx}\cos y).
\end{equation}
Note that $\partial\overline{\tau}_{1,k}=C$, where $C$ denotes the great circle 
\begin{equation}C:=\{(z,0)\in\mathbb{C}^2\;|\; |z|=1\}.\end{equation}
If $k=2$, then $\overline{\tau}_{1,k}$ is embedded.  If $k>2$ then the surface $\overline{\tau}_{1,k}$ is embedded away from the polar circle $C^*$ to $C$:
\begin{equation}C^*:=\{(0,w)\in\mathbb{C}^2\;|\; |w|=1\}.\end{equation}
Along the great circle $C^*$, there are $k/2$ sheets of $\overline{\tau}_{1,k}$ that meet at equal (consecutive) angles $\frac{2\pi}{k}$.  Indeed, let us verify this.  If $y=0$ then 
\begin{equation}
F_{1,k}(x,0) = (0,e^{ikx}).
\end{equation}
Given a point $(0,e^{i\alpha})\in C^*$, there are $k$ preimages $F_{1,k}^{-1}((0,e^{i\alpha}))$ on the segment $\{y=0\}$ in $\mathcal{D}$ given by  $\alpha_j =\alpha/k + 2\pi j/k$ ($j=0,1,..,k-1$).
Moreover, we can compute
\begin{equation}
D_j=\frac{\partial}{\partial y}\Big|_{y=0}F_{1,k}(\alpha_j,y)=(e^{i\alpha_j},0).
\end{equation}
and 
\begin{equation}
\frac{\partial}{\partial x}\Big|_{y=0}F_{1,k}(\alpha_j,y)=(0,ike^{ik\alpha_j}), 
\end{equation}
which points along $C^*$.  Thus we get
\begin{equation}
( D_j, D_{j+1}) =(\cos( \alpha_j), \sin(\alpha_j),0,0)\cdot (\cos(\alpha_{j+1}),\sin(\alpha_{j+1},0,0) = \cos(\alpha_{j+1}-\alpha_j).
\end{equation}

Since $\alpha_{j+1}-\alpha_j=\frac{2\pi}{k}$, it follows that the angle $\theta$ at which successive sheets of $\overline{\tau}_{1,k}$ meet at $(0,e^{i\alpha})\in C^*$ is given by 
\begin{equation}
\theta = \arccos((D_j, D_{j+1})) = \arccos (\cos(\frac{2\pi}{k})) = \frac{2\pi}{k}.
\end{equation}

The Lawson bands $\overline{\tau}_{1,k}$ are also ruled as the segments $\{x=\mbox{const}\}$ correspond to geodesic segments of length $\pi$.  By a result of Lawson (\cite{LS}, Proposition 7.2), the bands $\overline{\tau}_{1,k}$ are the unique ruled minimal surfaces with boundary a great circle in $\mathbb{S}^3$.
\subsection{Euler number}
Differentiating \eqref{param} with respect to $\eta$ at $y=\pm\pi/2$ we get the inward conormals along $C$:
\begin{equation}\label{upperderiv}
\nu_{\overline{\tau}_{1,k}}(x,\frac{\pi}{2}) = -\frac{\partial}{\partial y}\Big|_{y=\pi/2} F_{1,k}(x,y) =  (0,  e^{ikx}), 
\end{equation}
and 
\begin{equation}\label{lowerderiv}
\nu_{\overline{\tau}_{1,k}}(x,-\frac{\pi}{2}) = \frac{\partial}{\partial y}\Big|_{y=-\pi/2} F_{1,k}(x,y) =  (0,  e^{ikx}).
\end{equation}
Fix a hemisphere in $\mathbb{S}^3$ with boundary $C$ with inner conormal (constant along $C$) $\nu_0=(0,0,1,0)$. Note that \eqref{upperderiv} and \eqref{lowerderiv} imply that 
\begin{equation}\label{conormaldot}
\nu_0(x,\pm\frac{\pi}2{})\cdot \nu_{\overline{\tau}_{1,k}}(x,\pm\frac{\pi}{2})=(0,0,1,0)\cdot (0,0,\cos(kx),\sin(kx))=\cos(kx).  
\end{equation}
We obtain from \eqref{conormaldot} and the boundary identifications of the top and bottom of the domain $\mathcal{D}=[0,\pi]\times [-\frac{\pi}{2},\frac{\pi}{2}]$ that
\begin{equation}
e(\overline{\tau}_{1,k})= k.
\end{equation}
\subsection{Symmetries}\label{symmetries}
Let us also determine the symmetries in $SO(4)$ of the Lawson bands $\overline{\tau}_{1,k}$.  In fact, they are real algebraic surfaces of degree $k+1$:
\begin{equation}\label{algform}
\overline{\tau}_{1,k}=\{(z,w)\in\mathbb{S}^3\;|\;\mbox{Im}(z^{k}\overline{w})=0\mbox{ and } \mbox{Re}(z^{k}\overline{w})\geq0\}.
\end{equation}

From \eqref{algform} it is apparent that the symmetry group of $\overline{\tau}_{1,k}$ is the group $\mathbb{S}^1\rtimes\mathbb{Z}_2$ where the $\mathbb{S}^1$-action is given by 
\begin{equation}
e^{i\theta}(z,w) = (e^{i\theta}z,e^{ik\theta}w).  
\end{equation}
The $\mathbb{Z}_2$-action is conjugation:
\begin{equation}
c(z,w) = (\overline{z},\overline{w}).
\end{equation}
Note that in coordinates $(x_1,x_2,x_3,x_4)$ of $\mathbb{R}^4$ we have
\begin{equation}
c(x_1,x_2,x_3,x_4) = (x_1,-x_2,x_3,-x_4), 
\end{equation}
which implies that $\mbox{det}(c)=1$.  Thus $c\in SO(4)$.

Let $c_2:\mathbb{S}^3\to\mathbb{S}^3$ be given by 
\begin{equation}
c_2(z,w) = (z,\overline{w}), 
\end{equation}
or equivalently
\begin{equation}
c_2(x_1,x_2,x_3,x_4)=(x_1,x_2,x_3,-x_4), 
\end{equation}
Note from \eqref{signs} that
\begin{equation}\label{secondconj}
c_2(\overline{\tau}_{1,k})=\overline{\tau}_{1,-k}.
\end{equation}
\subsection{Area}
The area of $\overline{\tau}_{1,k}$ can be expressed by
\begin{equation}
\mbox{Area}(\overline{\tau}_{1,k})=2\pi kE(1-\frac{1}{k^2}), 
\end{equation}
where $E(p)$ denotes the elliptic integral of the second kind 
\begin{equation}
E(p) =\int_0^{\pi/2}\sqrt{1-p\sin^2(\theta)}d\theta.
\end{equation}
For $\varepsilon$ near zero, there holds (\cite{DLMF}, Section 19.12.2):
\begin{equation}
E(1-\varepsilon)
=1-\frac{\varepsilon}{4}\,\ln\\\varepsilon
+
O(\varepsilon),
\end{equation}
and thus we may expand
\begin{equation}
|\overline{\tau}_{1,2k}|=4\pi k E(1-\frac{1}{4k^2})= 4\pi k +\frac{\pi\ln(k)}{2k}+O(\frac{1}{k}).
\end{equation}
In particular, 
\begin{equation}\label{limitarea}
\lim_{k\to\infty}(|\overline{\tau}_{1,2k}|-4\pi k)=0.
\end{equation}

\section{Proof of Theorem \ref{main}}\label{mainsection}
In this section, we prove Theorem \ref{main}.
\subsection{Set up}
Let us parameterize $C$ by
\begin{equation}
C(\theta)= (e^{i\theta},0)\mbox{ with } 0\leq\theta\leq 2\pi.
\end{equation}
and $C^*$ by
\begin{equation}
C^*(\theta)= (0,e^{i\theta})\mbox{ with } 0\leq\theta\leq 2\pi.
\end{equation}
Note that for all $\theta$ and $\phi$, 
\begin{equation}\label{dist}
\mbox{dist}_{\mathbb{S}^3}(C(\theta),C^*(\phi))=\frac{\pi}{2}.
\end{equation} 
Moreover, the unique minimizing geodesic realizing the infimal distance in \eqref{dist} is given by 

\begin{equation}
\gamma_{\theta,\phi}(t)=(e^{i\theta}\sin t,e^{i\phi}\cos t)\mbox{ for } 0\leq t\leq\frac{\pi}{2}.
\end{equation}
The $\mathbb{S}^1$-family of great hemispheres $\{H_\phi\}_{\phi\in [0,2\pi]}$ in $\mathbb{S}^3$ with boundary $C$ is given by
\begin{equation}
H_\phi =
\{ (z,w) \in S^3 :
\operatorname{Im}(e^{-i\phi} w)=0,\;
\operatorname{Re}(e^{-i\phi} w)\ge 0 \}.
\end{equation}
Note that for each $\phi\in [0,2\pi]$, 
\begin{equation}
H_\phi\cup H_{\pi+\phi}\mbox{ is a great sphere.}
\end{equation}
Note also that \begin{equation}\gamma_{\theta,\phi}\subset H_\phi\mbox{ for all } \theta\in [0,2\pi]\mbox{ and } \phi\in [0,2\pi].\end{equation}
Moreover, the inner conormal $\nu_\phi(\theta)$ to $H_\phi$ along $C$ at $C(\theta)$ is given by 
\begin{equation}
\nu_\phi(\theta) = \dot{\gamma}_{\theta, \phi}(\pi/2)=(0, e^{i\phi}).   
\end{equation}
\noindent
For $i=0,1,...,m-1$ denote equally spaced hemispheres containing $C$:
\begin{equation}
H^i:=H_{\frac{2\pi i}{m}}.
\end{equation}
\noindent
For each $i=0,...,m-1$, let us denote by $W_i$ the closure of the component of \begin{equation}\mathbb{S}^3\setminus (H^i\cup H^{i+1})\end{equation} of smaller volume.  Each convex set $W_i$ is a lens-like three-ball with boundary two hemispheres meeting at angle $\frac{2\pi}{m}$ along $C$.
\noindent
Let $\rho_{\theta}:\mathbb{S}^3\to\mathbb{S}^3$ denote the rotation in $SO(4)$
\begin{equation}
\rho_\theta(z,w)= (e^{i\theta}z,w). 
\end{equation}
and let $\sigma_{\theta}:\mathbb{S}^3\to\mathbb{S}^3$ denote the rotation in $SO(4)$
\begin{equation}
\sigma_\theta(z,w)= (z,e^{i\theta}w). 
\end{equation}

\noindent

\subsection{Fundamental polygon and patch}
We now construct a closed path $\Gamma$ in $\partial W_0$ comprised of six geodesic segments.  For $P,Q\in\mathbb{S}^3$ with \begin{equation}\mbox{dist}_{\mathbb{S}^3}(P,Q)<\pi,\end{equation} we denote by $PQ$ the unique geodesic segment from $P$ to $Q$.  Similarly if $\{P_1,...,P_q\}\in\mathbb{S}^3$ with 
\begin{equation}\mbox{dist}_{\mathbb{S}^3}(P_i,P_{i+1})<\pi\mbox{ for each } i=0,...,q-1,\end{equation} 
we denote by $P_1P_2...P_q$ the piecewise smooth geodesic segment obtained by concatenating the geodesic segments $P_1P_2$, $P_2P_3$,...,$P_{q-1}P_q$.

Let us divide the great circle $C$ into $2m(k+1)$ equally spaced points $\{A_0,...,A_{2m(k+1)-1}\}$ by setting 
\begin{equation}
A_j=(e^{2\pi i\frac{j}{2m(k+1)}},0).
\end{equation}
Similarly, let us divide the great circle $C^*$ into $m$ equally spaced points $\{B_0,...,B_{m-1}\}$ by setting
\begin{equation}
B_j = (0,e^{2\pi i\frac{ j}{m}}).
\end{equation}  
Then define the geodesic circuit comprised of six segments:
\begin{equation}
\Gamma=B_0A_0A_1 B_1 A_{1+m} A_{m}B_0.
\end{equation}
Note that  \begin{equation}\label{pointrotate}\rho_{\frac{2 \pi p}{k+1}}(A_j)=A_{j+2pm}\mbox{ and } \rho_{\frac{2 \pi p}{k+1}}(B_j)=B_j,\end{equation} where the indices in the first equality are understood mod $2m(k+1)$.
It follows that
\begin{equation}\label{howtau}
\partial(\overline{\tau}_{1,2(k+1)}\cap\mbox{int}(W_0)) = \bigcup_{p=0}^{k} \rho_{\frac{2\pi p}{k+1}}( \Gamma).
\end{equation}

    \begin{figure}
	\centering
	\resizebox{3in}{!}{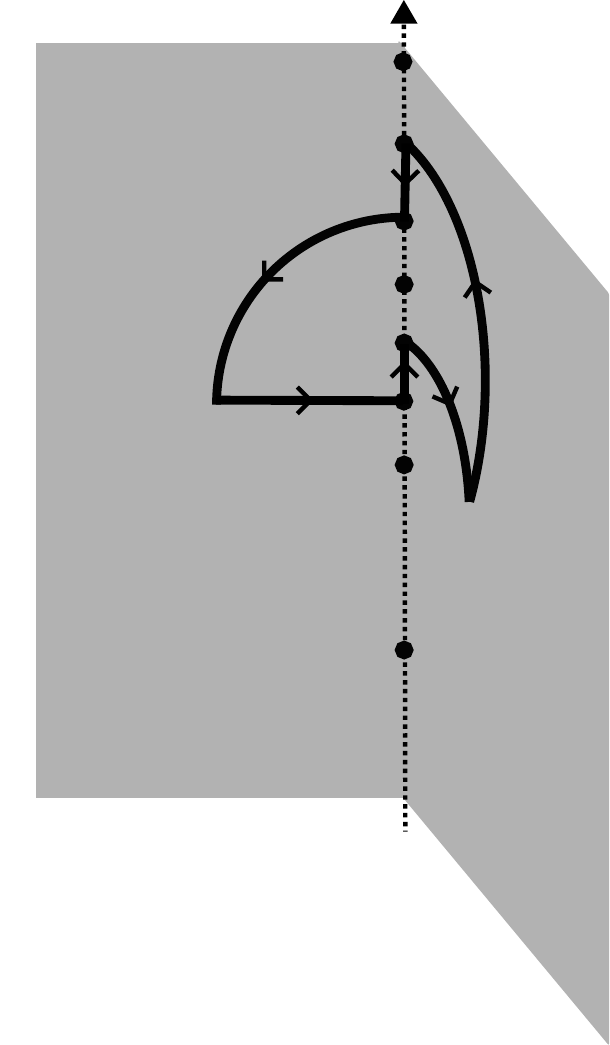}
	\caption{Stereographic projection of $\Gamma=\Gamma^1$ and $\Gamma^2=\rho_{\pi}(\Gamma) $  for $m=3$ and $k=1$.}
	\label{GammaFig}
\end{figure}

   \begin{figure}
	\centering
	\resizebox{3in}{!}{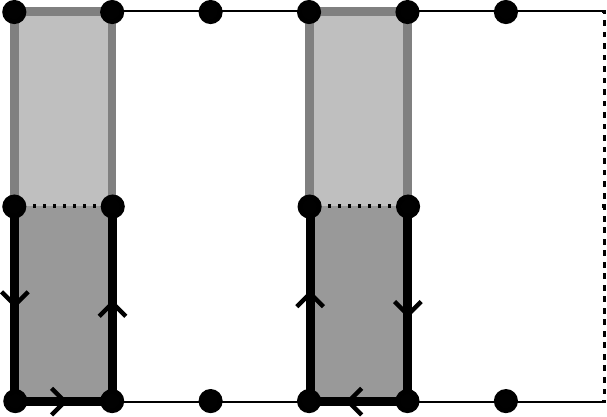}
	\caption[Domain of$F_{1,4}$]{Domain of ${F}_{1,4}$ with following indicated: $F_{1,4}(a_i)=A_i$, $F_{1,4}(b_i)=B_i$ ,  $i$ mod $2$;
	 $F_{1,4}(\gamma_1^i\cup \gamma_2^i)=\Gamma^i$; 
	 ${F}_{1,4}(c_i^*)=C^*\cap W_{0}$;  
	 $F_{1,4}(D_1^i\cup D_2^i)=S_1\cup S_2=\bar{\tau}_{1,4}\cap \overline{W}_0^i$.
	 }
	\label{GammaParamFig}
\end{figure}

To see \eqref{howtau}, recall first that $\overline{\tau}_{1,2(k+1)}$ is ruled by geodesic segments emanating orthogonally from $C$.   As $\theta$ increases from $0$ to $2\pi$, the conormal to $\tau_{1,2(k+1)}$ at $C(\theta)$ points into each of the regions $W_0,...,W_{m-1}$ for an equal fraction ($2\pi/m$) of $\theta$ values. In addition, the range of $\theta$ values for which the conormal points into $W_0$ is itself divided into $2(k+1)$ intervals of equal length that are equally separated.  Thus each such interval has length $\frac{\pi}{m(k+1)}$.  Moreover, when a geodesic on $\overline{\tau}_{1,2(k+1)}$ emanating from $C(\theta)$ has initial velocity vector pointing into $\partial W_0$, it must remain on $\partial W_0$ until the antipodal point $C(\theta+\pi)$ since $\partial W_0$ is totally geodesic.  

Let us define an involution $\iota\in SO(4)$ by
\begin{equation}\label{defiota}
\iota(z, w) = \left( e^{i \frac{\pi(m+1)}{m(k+1)}} \bar{z}, \ e^{i \frac{2\pi}{m}} \bar{w} \right).
\end{equation}
The map $\iota$ is a rotation of $\pi$ about a geodesic that bisects the lens $W_0$, flipping it upside down. It satisfies the following properties:
\begin{enumerate}
\item $\iota^2=e$ and $\iota\in SO(4)$. \label{firstitem}
\item $\iota(C) = C$.\label{cpreserved}
\item $\iota(\Gamma)=\Gamma$ and $\iota$ fixes no point on $\Gamma$.\label{seconditem}
\item $\iota(W_0)=W_0$. \label{thirditem}
\item $\iota(\overline{\tau}_{1,2(k+1)}\cap W_0)=\overline{\tau}_{1,2(k+1)}\cap W_0$.\label{fourthitem}
\end{enumerate}
To see item (\ref{firstitem}), observe that for any map $m:\mathbb{S}^3\to\mathbb{S}^3$ given by \begin{equation}m(z,w)=(e^{i\alpha}\overline{z},e^{i\beta}\overline{w})\end{equation} with $\alpha, \beta\in\mathbb{R}$ (such as $\iota$) there holds
\begin{equation}
m^2(z,w) = (e^{i\alpha}(\overline{e^{i\alpha}\overline{z})},e^{i\alpha}(\overline{e^{i\alpha}\overline{w})}) = (z,w).
\end{equation}
Recall from Section \ref{symmetries} that the conjugation map $c$ given by $c(z,w)=(\overline{z},\overline{w})$ is in $SO(4)$ and $\iota$ is equal to $c$ composed with a rotation.  Thus $\iota\in SO(4)$.
Item (\ref{cpreserved}) is immediate from \eqref{defiota}.  To see item (\ref{seconditem}), observe that $\iota(B_0)=B_1$, $\iota(B_1)=B_0$ and $\iota(A_j)=A_{m+1-j}$ for each $j$.  To see that $\iota$ fixes no points of $\Gamma$, observe that the fixed point set of any map $m$ as above is \begin{equation}F_m:=\{(e^{i\alpha/2}r_1,e^{i\beta/2}r_2)\in\mathbb{S}^3\;|\; r_1^2+r_2^2=1\}.\end{equation}  Thus the fixed point set of $\iota$ is 
\begin{equation}
F_{\iota}=\{(e^{i \frac{\pi(m+1)}{2m(k+1)}} r_1, \ e^{i \frac{\pi}{m}} r_2)\in\mathbb{S}^3 \;|\; r_1^2+r^2_2=1\}.
\end{equation}
Thus $F_\iota\cap W_0\subset H_{\pi/m}$ which implies that $F_\iota$ is disjoint from the segments of $\Gamma$ between $A$ points and $B$ points (which are in $H_0$ and $H_{2\pi/m}$.  On the other hand we can see directly that $F_\iota\cap C$ is disjoint from the arcs $A_0A_1$ and $A_mA_{1+m}$.
To see item (\ref{thirditem}) observe that if $(z,w)\in H^0$, and thus $\mbox{Im}(w)=0$ then $\mbox{Im}(e^{-i\frac{2\pi}{m}}e^{i\frac{2\pi}{m}}\overline{w})=\mbox{Im}(w)=0$.  Similarly if $\mbox{Re}(w)\geq 0$ then \begin{equation}\mbox{Re}(e^{-i2\pi/m}e^{i2\pi/m}\overline{w})=\mbox{Re}(w)\geq 0.\end{equation}  Recalling the definition of $H^i$, this implies $\iota(H^0)=H^1$.  Similarly, we see $\iota(H^1)=H^0$. Item (\ref{fourthitem}) follows trivially.  This completes the verification of the desired properties of $\iota$.

Consider the group 
\begin{equation}\label{GkmGrp}
    G_{k,m}\subset SO(4)
\end{equation} generated by the rotation $\rho_{\frac{2\pi}{k+1}}$ and involution $\iota$.  The group $G_{k,m}$ is isomorphic to the dihedral group $\mathbb{D}_{k+1}$ with $2(k+1)$ elements since we can compute directly 
\begin{equation}\label{relation}
\iota\circ \rho_{\frac{2\pi}{k+1}}\circ\iota =\rho^{-1}_{\frac{2\pi}{k+1}},
\end{equation}
while $\iota^2=e$ and $\rho^{k+1}_{\frac{2\pi}{k+1}}=e$.

Let us decompose the set
\begin{equation}
W_0\setminus \overline{\tau}_{1,2(k+1)}=W_0^1\cup W_0^2\cup...\cup W_0^{2(k+1)}
\end{equation}
into $2(k+1)$ pairwise disjoint ``chambers," where
\begin{equation}
W_0^j:=\rho_{\frac{\pi j}{k+1}}(W_0^1), 
\end{equation}
and the labeling is chosen so that
\begin{equation}\label{whereisgamma}
\Gamma\subset \partial \overline{W}_0^1.
\end{equation}

Moreover, for each $j=1,...,2(k+1)$, there exists $\iota\neq e\in G_{k,m}$ with $\iota^2=e$ so that 
\begin{equation}
\iota(W_0^j)=W^j_0.
\end{equation}

The closure of each chamber $\overline{W}_0^i$ is mean convex.  Indeed, its boundary is  piecewise minimal, two sides of which are pieces of minimal hemispheres  $P_0\subset H^0$ and $P_1\subset H^1$ and two sides $S_1$ and $S_2$ of which are pieces of $\overline{\tau}_{1,2(k+1)}$ with boundary
\begin{equation}
\partial S_1 =B_0A_0A_1B_1B_0\mbox{ and } \partial S_2 = B_1A_{1+m}A_mB_0B_1.
\end{equation}
and so that 
\begin{equation}
S_1\cap S_2=\partial S_1\cap \partial S_2=B_0B_1.
\end{equation}
Let us verify that the angles where these faces meet are all at most $\pi$.  Recalling the computation in Section \ref{immersecsection}, along the geodesic segment $B_0B_1$ on $C^*$, the surfaces $S_1$ and $S_2$ meet at the constant angle $\frac{\pi}{k+1}$.  Since $S_1$ and $S_2$ are constrained to the convex wedge $W_0$, it is clear \emph{a priori} that the angles at which they intersect $P_0$ and $P_1$ are constrained to meet at angles at most $\pi$.  For the sake of completeness, let us describe the dihedral angles.  Along $A_0A_1$, $P_0$ and $S_1$ meet at angles increasing monotonically from $0$ to $\frac{2\pi}{m}$. Along $A_1B_1$, $P_1$ and $S_1$ meet at angles beginning at $\pi$ and descending to $\pi/2$ monotonically.   Along $B_0A_0$, the surfaces $S_1$ and $P_0$ meet at angles descending monotonically from $\pi/2$ to $0$.  The intersection of $S_2$ with $P_0$ and $P_1$ are analogous.  Since $m\geq 3$, all the angles enumerated above are at most $\pi$ implying $\overline{W}_0^1$ is (piecewise) mean convex. 

Since by \eqref{whereisgamma}, the contour $\Gamma$ lies on the boundary of a mean convex domain, we can apply Theorem 6.1 in \cite{J} to find an embedded minimal disk $\mathcal{P}$ satisfying
\begin{enumerate}[(a)]
\item $\mathcal{P}\subset\overline{W}_0^1$, 
\item $\partial\mathcal{P}=\Gamma$, 
\item $\iota(\mathcal{P})=\mathcal{P}$.\label{whyinvariant}
\end{enumerate}
Item (\ref{whyinvariant}) follows from the proof of a result of Meeks-Yau (\cite{MYD}, Theorem 4) establishing that an area-minimizing disk in a three-manifold is invariant under any finite group of orientation-preserving isometries of the ambient manifold preserving and acting freely on the boundary of the disk.  The involution $\iota$ indeed preserves the contour $\Gamma$ and acts freely on it by item (\ref{seconditem}).

  \begin{figure}
	\centering
	\resizebox{4in}{!}{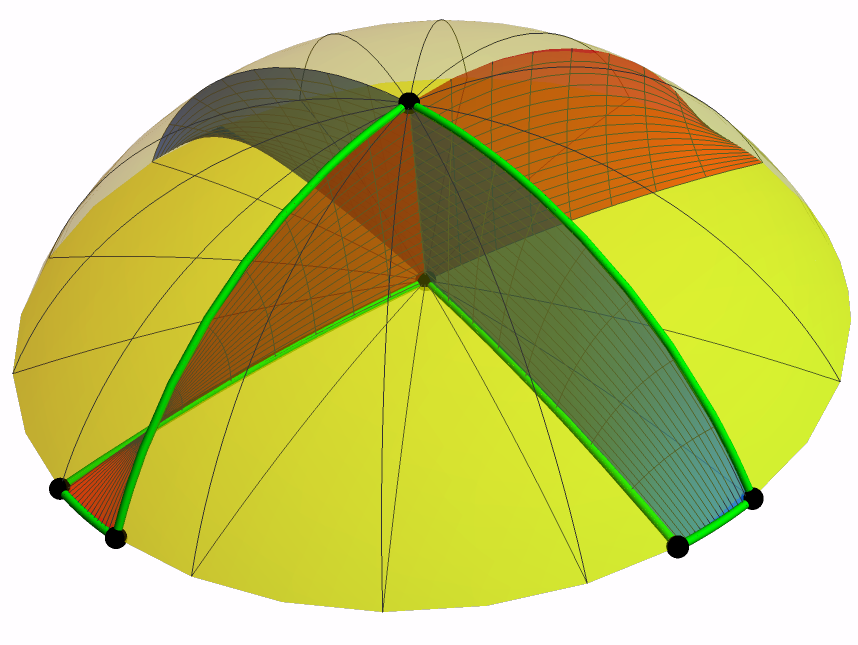}
	\caption{Stereographic projection of lens along with $W_0\cap\bar{\tau}_{1,4}$ and $\Gamma$ for $m=7$ and $k=1$.}
	\label{Plotm7}
\end{figure}

Recalling \eqref{howtau}, we can reparameterize $\overline{\tau}_{1,2(k+1)}\cap W_0$ as a $\mathbb{D}_{k+1}$-equivariant union of $(k+1)$ piecewise smooth disks with boundary in $W_0$ whose singular sets are a crease along the geodesic segment $C^*\cap W_0$:
\begin{equation}\label{oy}
\overline{\tau}_{1,2(k+1)}\cap W_0= \bigcup_{j=1}^{k+1} \rho_{\frac{2\pi j}{k+1}}(\overline{\tau}_{1,2(k+1)}\cap \overline{W}_0^1).
\end{equation}
From \eqref{oy} it follows that
\begin{equation}\label{areabound}
\mbox{Area}(\mathcal{P})< \frac{\mbox{Area}(\overline{\tau}_{1,2(k+1)}\cap W_0)}{k+1}, 
\end{equation}
as if there were equality, each $\rho_{\frac{2\pi i}{k+1}}(\overline{\tau}_{1,2(k+1)}\cap \overline{W}_0^1)$ would be a non-smooth area-minimizing disk, an impossibility.  Again because the area-minimizing disk $\mathcal{P}$ is smooth, it follows that 
\begin{equation}
\mathcal{P}\cap \partial \overline{W}_0^1=\partial\mathcal{P}.
\end{equation}

\subsection{Filling the initial lens}
We show that $m(k+1)$ successive reflections of the fundamental patch $\mathcal{P}$ generate an embedded surface $\tilde{\xi}_{k,m}$ with boundary $C$ that occupies one half of a partition of $\mathbb{S}^3$ into $2m(k+1)$ congruent regions.

First set
\begin{equation}\label{q1}
\mathcal{Q}_1:=\bigcup_{p=0}^{k}\rho_{\frac{2\pi p}{k+1}}(\mathcal{P}). 
\end{equation}
The isometric copies of $\mathcal{P}$ comprising $\mathcal{Q}_1$ fill in half of the chambers of the lens $W_0$. We claim that $\mathcal{Q}_1$ is $G_{k,m}$-equivariant. To see this, note that $\mathcal{Q}_1$ is manifestly invariant under $\rho_{\frac{2\pi}{k+1}}$.  To see that it is invariant under $\iota$, observe that by the relation \eqref{relation} in $G_{k,m}$ we get 

\begin{equation}\label{relation2}
\iota\circ\rho_{\frac{2\pi p}{k+1}}=\iota\circ \rho^p_{\frac{2\pi}{k+1}}=\rho^{-p}_{\frac{2\pi }{k+1}}\circ\iota=\rho_{-\frac{2\pi p}{k+1}}\circ\iota \end{equation} 
Thus since $\mathcal{P}$ is $\iota$-invariant by construction, applying \eqref{relation2} we get
\begin{equation}
\iota(\mathcal{Q}_1)=\bigcup_{p=0}^{k}\iota\circ \rho_{\frac{2\pi p}{k+1}}(\mathcal{P})=\bigcup_{p=0}^{k}\rho_{-\frac{2\pi p}{k+1}}(\mathcal{P})=\mathcal{Q}_1.
\end{equation}
Finally note that
\begin{equation}\label{firstit}
\mathcal{Q}_1\cap H_0=\bigcup_{p=0}^k \rho_{\frac{2\pi p }{k+1}}(A_0B_0\cup A_{1+m}A_{m}\cup A_0A_1\cup B_0A_{m}), 
\end{equation}
and
\begin{equation}\label{secondit}
\mathcal{Q}_1\cap H_1=\bigcup_{p=0}^k \rho_{\frac{2\pi p }{k+1}}(A_1B_1\cup A_{1+m}A_{m}\cup A_0A_1\cup B_1A_{1+m}).  
\end{equation}
Using \eqref{pointrotate}, we see that in $\mathcal{Q}_1\cap\mbox{int}(H_0)$ (resp. $\mathcal{Q}_1\cap\mbox{int}(H_1)$) the enumerated segments in \eqref{firstit} and \eqref{secondit} pair off to comprise $k+1$ equally-spaced geodesic segments of length $\pi$ meeting at $B_0$ (resp. $B_1$).  Indeed, 
\begin{equation}\label{pairingoff}
\mathcal{Q}_1\cap\mbox{int}(H_0)=\bigcup_{p=0}^k (A_{2pm}B_0)\cup(B_0A_{m(1+2p)}), 
\end{equation}
and thus each point in the list $\{A_0,A_m,A_{2m},...,A_{(2k+1)m}\}$ pairs off with its antipode in the same list as the antipodal point to $A_{qm}$ is $A_{qm+m(k+1)}=A_{m(q+k+1)}$. This completes the discussion of filling out half of the chambers in the lens $\mathcal{Q}_1$. 

\subsection{Reflections}
Given two non-antipodal points $A$ and $B$ in $\mathbb{S}^3$, denote by $R_{AB}$ the $\pi$-rotation through the unique geodesic containing $A$ and $B$.  Note that a direct computation gives
\begin{equation}\label{rind}
R_{A_iB_j}(A_kB_l) = A_{2i-k}B_{2j-l}, 
\end{equation}
where the integer label of $A$ is understood modulo $2m(k+1)$ and the integer label of $B$ modulo $m$.
We now describe an inductive procedure to fill out the remaining lenses $W_1,...,W_{m-1}$. Given $\mathcal{Q}_i$, we define $\mathcal{Q}_{i+1}$ inductively by
\begin{equation}\label{defind}
\mathcal{Q}_{i+1}:=R_i(\mathcal{Q}_i), 
\end{equation}
where for $i\geq 0$, $R_i$ is the $\pi$-rotation given by 
\begin{equation}\label{schwarz}
R_{i+1}=R_{B_{i+1}A_{(i+1)(1+m)}}.
\end{equation}
The index of $A$ in \eqref{schwarz} is interpreted modulo $2m(k+1)$.
We claim
\begin{equation}
\tilde{\xi}_{k,m}:=\bigcup_{i=1}^m \mathcal{Q}_i.
\end{equation}
is a smooth embedded minimal surface in $\mathbb{S}^3$ with boundary $C$.

We first claim by induction that \begin{equation}\label{axiswhere}B_{i}A_{i(1+m)}\subset \partial\mathcal{Q}_i \cap H_i.\end{equation}

The case $i=1$ is established by \eqref{firstit}.  Assume by induction that $B_{i}A_{i(1+m)}\subset \partial\mathcal{Q}_{i} \cap H_i$ and so $B_{i-1}A_{(i-1)(1+m)}\subset \partial\mathcal{Q}_{i}\cap H_{i-1}$.  On the one hand,  by the inductive definition \eqref{defind}:
\begin{equation}
R_{i}(B_{i-1}A_{(i-1)(1+m)})\subset\partial \mathcal{Q}_{i+1}\cap H_{i+1}.\end{equation}  
On the other hand, by \eqref{rind}, we have
\begin{equation}
R_{i}(B_{i-1}A_{(i-1)(1+m)})=R_{B_iA_{i(1+m)}} (B_{(i-1)A_{(i-1)(1+m))}})=   B_{i+1}A_{(i+1)(1+m)}, 
\end{equation}
completing the induction.

From \eqref{axiswhere} we see that $\mathcal{Q}_{i+1}$ is obtained by Schwarz reflection along one of the $k+1$ boundary segments of $\mathcal{Q}_i$ in $\mbox{int}(H_i)$.   We will show that by the involutive symmetry of $\mathcal{P}$, the Schwarz reflection \eqref{schwarz} is in fact a translation.  Toward that end, let us denote the translation
\begin{equation}
t=\sigma_{\frac{2\pi}{m}}\circ \rho_{\frac{2\pi (m+1)}{2m(k+1)}}.
\end{equation}
Then we can also express:
\begin{equation}\label{newrot}
R_i=t\circ R_{i-1}\circ t^{-1}.
\end{equation}
since the axes of the two rotations differ by the translation $t$.
Moreover, by construction, the surface $\mathcal{Q}_i$ is invariant under the dihedral group given inductively by \begin{equation}G^{i}_{k,m}=R_{i-1}\circ G^
{i-1}_{k,m}\circ R_{i-1}^{-1}.\end{equation} conjugate to $G^1_{k,m}=G_{k,m}$.

Note that by direct computation, 
\begin{equation}
R_1=t\circ \iota
\end{equation}
Define involutions $\iota_i\in G_{k,m}^i$ (setting $\iota_1=\iota$) inductively by
\begin{equation}\label{iota}
\iota_i=R_{i-1}\circ \iota_{i-1}\circ R^{-1}_{i-1}
\end{equation}
We claim for all $i$, 
\begin{equation}\label{want}
R_{i}=t\circ \iota_i.
\end{equation}
The case $i=1$ was established above.  Assume by induction
\begin{equation}\label{generalinvariance}
R_{i-1}=t\circ \iota_{i-1}.
\end{equation} 
Plugging this into \eqref{iota} we get (since $\iota_{i-1}$ is an involution)
\begin{equation}\label{iotasimple}
\iota_i=t\circ \iota_{i-1}\circ\iota_{i-1}\circ\iota_{i-1}\circ t^{-1}=t\circ\iota_{i-1}\circ t^{-1}. 
\end{equation}
Plugging \eqref{newrot} and \eqref{iotasimple} into the inductive assumption \eqref{generalinvariance} we obtain
\begin{equation}
t^{-1}\circ R_i\circ t =t\circ (t^{-1})\circ \iota_{i} \circ t
\end{equation}
which after rearranging gives \eqref{want}
as desired. 

Since $\mathcal{Q}_i$ is invariant under $G_{k,m}^i$ we get from \eqref{generalinvariance}
\begin{equation}\label{iteration}
\mathcal{Q}_{i+1} = t\circ\iota_i (\mathcal{Q}_i)= t(\mathcal{Q}_i)= \sigma_{\frac{2\pi}{m}}\circ \rho_{\frac{2\pi (m+1)}{2m(k+1)}}(\mathcal{Q}_i).
\end{equation}
\noindent
Thus $\mathcal{Q}_{i+1}$ arises from $\mathcal{Q}_i$ from both Schwarz reflecting as well as a translation.  

Note from \eqref{pairingoff} that $\mathcal{Q}_i\cap\mbox{int}(H_i)=\mathcal{Q}_{i+1}\cap\mbox{int}(H_i)$ for each $i$ since rotating by $\pi$ about one of $k+1$ equally spaced geodesic segments in $\mathcal{Q}_i\cap\mbox{int}(H_i)$ preserves the collection of segments.  The choice of segment over which we Schwarz reflect to obtain $\mathcal{Q}_{i+1}$ does not alter the resulting surface.  Indeed, if $\tilde{R}_i$ denotes the $\pi$ rotation about one of the \emph{other} geodesic segments $B_iA_{pm+i(1+m)}$
in $\mathcal{Q}_i\cap\mbox{int}(H_i)$ for some $p$, a short computation gives
\begin{equation}
\tilde{R}_i\circ R_i = \rho_{\frac{2\pi p}{k+1}}.
\end{equation}
Thus since $\mathcal{Q}_i$ is invariant under $ \rho_{\frac{2\pi p}{k+1}}$ we obtain
\begin{equation}
\tilde{R}_i\circ R_i(\mathcal{Q}_i)=\rho_{\frac{2\pi p}{k+1}}(\mathcal{}Q_i)=\mathcal{Q}_i, 
\end{equation}
so that
\begin{equation}\label{ind}
R_i(\mathcal{Q}_i)=\tilde{R}_i(\mathcal{Q}_i)=\mathcal{Q}_{i+1}.
\end{equation}

Since $\mathcal{Q}_{i+1}$ arises from $\mathcal{Q}_i$ by Schwarz reflection over a boundary segment and by \eqref{ind} the choice of boundary segment over which one Schwarz reflects gives the same surface, it follows that $\mathcal{Q}_{i+1}$ extends $\mathcal{Q}_i$ smoothly over each of the $2(k+1)$ equally-spaced geodesic segments $\mbox{int}(H_i)\setminus \{B_i\}$.  By the removability of singularities theorem, it is also smooth over the point $B_i$. 

Thus $\tilde{\xi}_{k,m}$ is a smooth, connected, embedded minimal surface away from its intersection in $\mbox{int}(H_0)$ where it might fail to close up.  Let us show that it indeed closes up.  This step uses the involutive symmetry and in particular its consequence \eqref{iteration}.  Indeed, after $m$ iterations, we get from \eqref{iteration}
\begin{equation}\label{last}
\mathcal{Q}_{m+1}=\rho_{\frac{2\pi(m+1)}{2(k+1)}}(\mathcal{Q}_1).
\end{equation}
Since $m$ is odd, $\frac{m+1}{2}$ is an integer and thus from from \eqref{q1} and \eqref{last} we deduce that
\begin{equation}
\mathcal{Q}_{m+1}=\mathcal{Q}_1.
\end{equation}
In other words, the surface $\tilde{\xi}_{k,m}$ extends smoothly over $H_0$ to close up.

By \eqref{areabound} we obtain
\begin{equation}\label{areaboundagain}
\mbox{Area}(\tilde{\xi}_{k,m})=m(k+1)\mbox{Area}(\mathcal{P})< \mbox{Area}(\overline{\tau}_{1,2(k+1)}), 
\end{equation}
establishing (\ref{item3}).  Item (\ref{item5}) follows directly from the choice of boundary curve of the fundamental patch $\mathcal{P}$.  

\subsection{Genus and Euler number}
Let us verify item (\ref{genus}).  Note that the six-sided geodesic polygon $\Gamma$ has $4$ interior angles equal to $\frac{\pi}{2}$ (at the vertices on $C$) and the other two interior angles are equal to $\frac{\pi}{k+1}$ (at the vertices on $C^*$).  Thus four of the exterior angles are $\frac{\pi}{2}$ and two exterior angles are $\frac{k\pi}{k+1}$. 

By the Gauss-Bonnet formula applied to the minimal surface $\mathcal{P}$, we obtain
\begin{equation}
2\pi+\frac{2k\pi}{k+1}+\int_\mathcal{P} K = 2\pi\chi(\mathcal{P}).  
\end{equation}
Since $\mathcal{P}$ is a disk, we get $\chi(\mathcal{P})=1$, and thus
\begin{equation}\label{each}
\int_\mathcal{P} K=-\frac{2k\pi}{k+1}.
\end{equation}
As the $m(k+1)$ pieces $\{\mathcal{P}_i\}_{i=1}^{m(k+1)}$ comprising the surface $\tilde{\xi}_{k,m}$ are pairwise isometric, we obtain from \eqref{each}
\begin{equation}\label{local}
\int_{\tilde{\xi}_{k,m}}K =\sum_{i=1}^{m(k+1)}\int_{\mathcal{P}_i} K=m(k+1)\int_{\mathcal{P}}K=-2\pi km.
\end{equation}
On the other hand, the Euler characteristic of $\xi_{k,m}$, a non-orientable surface with one boundary component is $1-\mbox{genus}(\tilde{\xi}_{k,m})$.  Thus by the Gauss-Bonnet formula again and \eqref{local}, 
\begin{equation}
-km = 1-\mbox{genus}(\tilde{\xi}_{k,m}), 
\end{equation}
whence
\begin{equation}
\mbox{genus}(\tilde{\xi}_{k,m})=1+km, 
\end{equation}
as desired.

Note that $e(\tilde{\xi}_{k,m})=2(k+1)$ because by construction, in each patch the total variation in the direction of the conormal along each of the patch's two segments on $C$ is $\frac{2\pi}{m}$ and there are $m(k+1)$ patches.  

This completes the verification of item (\ref{genus}).

\subsection{Symmetry and limit}
Let us verify item (\ref{item4}). We obtain from the inductive definition \eqref{iteration} that the minimal surface $\tilde{\xi}_{k,m}$ is invariant under $g_1,g_2\subset SO(4)$ given by
\begin{equation}
g_1:=t=\sigma_{\frac{2\pi}{m}}\circ\rho_{\frac{2\pi(m+1)}{2m(k+1)}}\mbox{ and } g_2=\rho_{\frac{2\pi}{k+1}}.
\end{equation}  
\noindent
To compute the symmetry group $G$ generated by $g_1$ and $g_2$, consider the map
\begin{equation}
\phi:\mathbb{Z}^2\to SO(4) 
\end{equation}
given by $\phi(r,s)=g_1^rg_s^s$.  Then 
\begin{equation}
G\cong\mbox{Im}(\phi)\cong \mathbb{Z}^2/\mbox{Ker}(\phi), 
\end{equation}
and $|G|$, the order of the group $G$, is the area of the parallelogram $\mathcal{P}(v_1,v_2)$ formed by the two spanning basis vectors $v_1$ and $v_2$ of the lattice $\mbox{Ker}(\phi)$.
To determine the lattice $\mbox{Ker}(\phi)$, suppose $s,t\in\mathbb{Z}$ satisfy $g_1^rg_2^s=e$.  We readily obtain (letting $a=(m+1)/2$):
\begin{equation}
\frac{r}{m}\in\mathbb{Z}\mbox{ and } \frac{ra}{m(k+1)}+\frac{s}{k+1}\in\mathbb{Z}.
\end{equation}
Thus, for integers $A$ and $B$ we get
\begin{equation}
r=mA\mbox{ and } Aa+s=B(k+1)
\end{equation}
so
\begin{equation}
(r,s) = (mA, B(k+1)-Aa)=A(m,-a)+B(0,k+1).
\end{equation}
The lattice $\mbox{Ker}(\phi)$ is thus generated by $v_1=(m,-a)$ and $v_2=(0,k+1)$ so that 
\begin{equation}
|G|=\mbox{Area}(\mathcal{P}(v_1,v_2))= \begin{vmatrix}
m & -a \\
0 & k+1
\end{vmatrix}=m(k+1).
\end{equation}
Note that 
\begin{equation}\label{goodelement}
g_3:=g_1^2g_2^{-1}=\sigma_{\frac{4\pi}{m}}\circ\rho_{\frac{2\pi}{m(k+1)}}=\sigma_{\frac{2\pi(2(k+1))}{m(k+1)}}\circ\rho_{\frac{2\pi}{m(k+1)}}\in G
\end{equation}
and the order of $g_3$ is clearly $m(k+1)$. Since $|G|=m(k+1)$ this implies that $G$ is the cyclic group $\mathbb{Z}_{m(k+1)}$.  

Note that the involution $\iota$ satisfies \begin{equation}\iota \circ g_3\circ  \iota=g^{-1}_3.\end{equation}  Thus since $g_3^{m(k+1)}=e$, the subgroup of $SO(4)$ preserving $\tilde{\xi}_{k,m}$ generated by $\iota$ and $g_3$ is the dihedral group $\mathbb{D}_{m(k+1)}$, establishing item (\ref{item4}).

Let us now verify item (\ref{item6}).  The equation \eqref{goodelement} implies that $\tilde{\xi}_{k,m}$ is invariant under a $\mathbb{Z}_{m(k+1)}$ subaction of the $\mathbb{S}^1$-action on $\mathbb{S}^3$:
\begin{equation}\label{action}
e^{i\theta}(z,w) = ( e^{i\theta}z, e^{2i(k+1)\theta}w).
\end{equation}
Recalling Section \ref{symmetries}, the surface $\overline{\tau}_{1,2(k+1)}$ is invariant under the full $\mathbb{S}^1$-action given in \eqref{action}.

Let $\tau:\mathbb{S}^3\to\mathbb{S}^3$ denote the Schwarz reflection about $C$. Let \begin{equation}D(\tilde{\xi}_{k,m})=\tilde{\xi}_{k,m}\cup\tau(\tilde{\xi}_{k,m})\end{equation} denote the smooth closed minimal surface in $\mathbb{S}^3$ containing $C$.  Then $D(\tilde{\xi}_{k,m})$ is a stationary integral varifold in $\mathbb{S}^3$.  If $m_i\to\infty$ is a sequence of odd numbers, then since the areas of $\tilde{\xi}_{k,m_i}$ are bounded uniformly by \eqref{areaboundagain}, Allard's Compactness Theorem implies after passing to a subsequence (not relabelled) the sequence $D(\tilde{\xi}_{k,m_i})$ converges as varifolds to a stationary integral varifold $V_k$. Moreover by \eqref{areaboundagain}
\begin{equation}\label{limitbound}
||V_k||=\lim_{i\to\infty}2||\tilde{\xi}_{k,m_i}||\leq 2\mbox{Area}(\overline{\tau}_{1,2(k+1)}).
\end{equation}

In addition, the support and multiplicity function of the limiting varifold $V_k$ are invariant under the $\mathbb{S}^1$-action described in \eqref{action}.

On the other hand, by item (\ref{item5}), the support of $V_k$ contains 
$\mbox{supp}(\tau_{1,2(k+1)})$ and this set occurs with multiplicity at least $1$ by integrality of $V_k$.  In light of \eqref{limitbound}, we obtain item  (\ref{item6}).  This completes the proof of Theorem \ref{main}.

\section{Incompressible surfaces in lens spaces}\label{lenssection}

Recall that if $p$ and $q$ are relatively prime, the quotient of the free $\mathbb{Z}_p$-action on $\mathbb{S}^3$ generated by 
\begin{equation}
\xi_{p,q}(z,w)=(e^{2\pi i/p}z,e^{2\pi i q/p}w)
\end{equation}
is the lens space $L(p,q)$.  Since the $\mathbb{Z}_p$-action defined above also depends on $q$, we call it the $(p,q)$-action.

Let \begin{equation}\pi_{p,q}:\mathbb{S}^3\to L(p,q)\end{equation} denote the projection map.  

For even $p=2k$, since $H_2(L(2k,q),\mathbb{Z}_2)\cong\mathbb{Z}_2$ but $H_2(L(2k,q),\mathbb{Z})\cong 0$  there exist homologically non-trivial non-orientable surfaces in $L(2k,q)$.  In 1969, Bredon-Wood \cite{Bre} determined the infimal genus of such a surface that is realized by an embedding in terms of data from the iterated fraction expansion of the ratio $p/q$. 

For $p$ odd, on the other hand, we have the following analogous situation on which our construction is based:
\begin{lemma}\label{disc}
If $p$ is odd, then any generator of $\pi_1(L(p,q))$ bounds a non-orientable surface but no orientable surface. 
\end{lemma}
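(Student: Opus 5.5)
Let $\gamma\subset L(p,q)$ be an embedded closed curve representing a generator of $\pi_1(L(p,q))\cong\mathbb{Z}_p$ (for instance $\gamma=\pi_{p,q}(C)$). I will prove the two assertions separately, using the homological obstruction sketched in the introduction for the orientable case and an explicit construction for the non-orientable case. Here "bounds" means there is a compact embedded surface $S\subset L(p,q)$ with $\partial S=\gamma$.

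\emph{No orientable surface.} Suppose $S$ is compact, orientable and $\partial S=\gamma$. An orientation of $S$ induces one on $\gamma$, and then $[\gamma]=\partial[S]=0$ in $H_1(L(p,q),\mathbb{Z})$. By Hurewicz, $H_1(L(p,q),\mathbb{Z})\cong\pi_1(L(p,q))^{ab}\cong\mathbb{Z}_p$, and a generator of $\pi_1$ maps to a generator of $H_1$. Since $p\geq 3$ (for $p=1$ the statement is vacuous), this class is nonzero, which is a contradiction. Immersed or singular orientable surfaces are excluded by the same argument, since a singular orientable chain would also null-homologize $[\gamma]$ over $\mathbb{Z}$.

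\emph{A non-orientable surface exists.} By the Universal Coefficient Theorem, $H_1(L(p,q),\mathbb{Z}_2)\cong H_1(L(p,q),\mathbb{Z})\otimes\mathbb{Z}_2\cong\mathbb{Z}_p\otimes\mathbb{Z}_2=0$ because $p$ is odd. Hence $[\gamma]_2=0$. The standard fact that a mod-$2$ null-homologous embedded curve in a $3$-manifold bounds a compact embedded surface then applies. One way to see it is by transversality: take the Poincaré--Lefschetz dual class in $H^1(L\setminus N(\gamma),\partial;\mathbb{Z}_2)$, represent it by a map to $\mathbb{RP}^\infty$ (or $\mathbb{RP}^N$), and pull back a hyperplane. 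One must arrange that on $\partial N(\gamma)$ the result is a single longitude; adding meridian disks fixes the slope and gives a surface $S$ with $\partial S=\gamma$. Alternatively, take an area minimizer among mod $2$ flat chains with boundary $\gamma$, as in the introduction; interior and boundary regularity in dimension $3$ make it an embedded surface. By the first part, $S$ cannot be orientable.

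A more concrete alternative is also available. Heegaard split $L(p,q)$ into two solid tori and take $\gamma$ to be the core of one of them. A meridian disk $D$ of the other solid torus meets the first solid torus's boundary in a $(p,q)$ curve. Connecting $\gamma$ to this curve by an annulus-with-twists and banding gives an explicit non-orientable surface, in the spirit of Bredon--Wood.

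\emph{Main obstacle.} The delicate step is in the existence part: passing from $[\gamma]_2=0$ to an embedded surface whose boundary is exactly $\gamma$, rather than a multiple cover of $\gamma$ or a curve with the wrong framing. I would address this first with the transversality argument on the knot exterior, keeping track of the boundary slope on $\partial N(\gamma)$. The count works because $\gamma$ meets the dual surface class an odd number of times, and this parity is exactly what $p$ odd guarantees.
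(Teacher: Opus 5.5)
Your argument is correct and is essentially the paper's. In both, $H_1(L(p,q);\mathbb{Z})\cong\mathbb{Z}_p$ rules out an orientable filling, and the Universal Coefficient Theorem gives $H_1(L(p,q);\mathbb{Z}_2)=0$; the paper then simply asserts that a (necessarily non-orientable) filling exists, whereas you justify that step in more detail.

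One correction to that justification: adding meridian disks cannot fix the boundary. For $\gamma$ the core of a Heegaard solid torus, the obvious dual surface is a meridian disk of the other solid torus, whose boundary is a $(p,q)$ cable, and that cable cobounds no annulus with $\gamma$; this is also why your ``annulus-with-twists'' does not work as stated. A working fix is the following. Take the absolute class in $H^1$ of the exterior that is $1$ on the meridian (not a relative class). Prescribe its representing map on $\partial N(\gamma)$ so that the transverse preimage is a single longitude, which is possible because the restricted class is Poincar\'e dual on the torus to a longitude mod $2$. Then extend over the exterior by homotopy extension and make the map transverse rel boundary. Alternatively, just use the mod $2$ area minimizer together with Hardt's boundary regularity.
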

\begin{proof}
Since $\pi_1(L(p,q))\cong\mathbb{Z}_p$ is abelian, we get that $H_1(L(p,q),\mathbb{Z})\cong\mathbb{Z}_p$.  By the Universal Coefficient Theorem (Theorem 6.2 in \cite{Massey}), we get (since the Tor term vanishes) 
\begin{equation}
H_1(L(p,q);\mathbb{Z}_2)\cong H_1(L(p,q),\mathbb{Z}))\otimes\mathbb{Z}_2 \cong\mathbb{Z}_p\otimes\mathbb{Z}_2\cong\mathbb{Z}_{\mbox{gcd}(p,2)}\cong 0, 
\end{equation}
since $p$ is odd.

Since $H_1(L(p,q);\mathbb{Z}_2)\cong0$ but $H_1(L(p,q);\mathbb{Z})\cong\mathbb{Z}_p$, it follows that a generator of $H_1(L(p,q),\mathbb{Z})\cong\pi_1(L(p,q))$ bounds a non-orientable surface but no orientable surface.  
\end{proof}

We thus obtain: 
\begin{thm}
For $p,q\in\mathbb{N}$ ($p\geq 2$ and $1\leq q< p$) such that $\mbox{gcd}(p,q)=1$ and $p$ is odd, there exists a non-orientable embedded minimal surface $\mu_{p,q}$ in $\mathbb{S}^3$ with boundary a great circle.  Moreover, the surface $\mu_{p,q}$ is invariant under the $(p,q)$-action.
\end{thm}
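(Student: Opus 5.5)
We will obtain $\mu_{p,q}$ by minimizing in the quotient and then lifting. The great circle $C=\{(z,0)\}$ is invariant under the $(p,q)$-action: $\xi_{p,q}$ acts on $C$ as rotation by $2\pi/p$. Hence $\gamma:=\pi_{p,q}(C)$ is a smooth embedded closed geodesic in $L(p,q)$. The map $C\to\gamma$ is a $p$-fold cover, and a lift of $\gamma$ starting at $(1,0)$ ends at $(e^{2\pi i/p},0)=\xi_{p,q}(1,0)$. So $\gamma$ represents the generator of $\pi_1(L(p,q))\cong\mathbb{Z}_p$ corresponding to $\xi_{p,q}$.

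By Lemma \ref{disc}, $\gamma$ bounds a non-orientable surface and no orientable one. Equivalently, $[\gamma]=0$ in $H_1(L(p,q);\mathbb{Z}_2)$ but $[\gamma]\neq0$ in $H_1(L(p,q);\mathbb{Z})$. We then minimize area among mod $2$ flat chains $T$ in $L(p,q)$ with $\partial T=\gamma$; the class is nonempty, so compactness for flat chains mod $2$ gives a minimizer $T$.

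\textbf{Regularity.} Interior regularity in dimension three follows from Fleming and Almgren--Schoen--Simon, as cited in the introduction: the minimizer is a smooth embedded surface away from $\gamma$. Boundary regularity along the smooth, indeed real analytic, curve $\gamma$ needs care for mod $2$ minimizers, and we expect it to be the main obstacle. Tangent cones at boundary points are area-minimizing mod $2$ cones in $\mathbb{R}^3$ bounded by a line. Any such cone is a union of half-planes, and minimality forces exactly one half-plane with multiplicity one, since an odd number of half-planes must meet along the line and three or more are not minimizing mod $2$. Allard's boundary regularity theorem then gives smoothness up to $\gamma$. If this step proves delicate, one can instead localize: near $\gamma$ the problem is an ordinary Plateau problem in a small ball, where mod $2$ and oriented minimizers coincide locally, and Hardt--Simon boundary regularity applies.

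The resulting smooth embedded surface $\Sigma\subset L(p,q)$ with $\partial\Sigma=\gamma$ must be non-orientable by Lemma \ref{disc}. Its genus is not controlled, and we do not claim it is connected. We take the component containing $\gamma$; since the other components are closed, discarding them keeps the boundary $\gamma$.

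\textbf{Lifting.} Set $\mu_{p,q}:=\pi_{p,q}^{-1}(\Sigma)$. Because $\pi_{p,q}$ is a local isometry and a covering, $\mu_{p,q}$ is a smooth embedded minimal surface invariant under the $(p,q)$-action, and $\partial\mu_{p,q}=\pi_{p,q}^{-1}(\gamma)=C$.

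\textbf{Non-orientability of the lift.} This requires an argument, since a cover of a non-orientable surface can be orientable. Suppose $\mu_{p,q}$ (or a component of it meeting $C$) were orientable. Then it would be an embedded orientable surface with boundary $C$. Pushing it forward by $\pi_{p,q}$ as an integral current gives $\partial(\pi_{p,q})_\#[\mu_{p,q}]=(\pi_{p,q})_\#[C]=p[\gamma]$ as integral cycles. This only shows $p[\gamma]=0$ in $H_1(L(p,q);\mathbb{Z})$, which is automatic, so it is no contradiction.

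Instead we argue with the deck group. Orientations of $\mu_{p,q}$ are permuted by $\mathbb{Z}_p$, and since $p$ is odd no element of $\mathbb{Z}_p$ can act as the orientation-reversing involution. If $\mu_{p,q}$ were orientable, then averaging, or simply choosing the orientation class fixed by the odd-order group acting on a two-element set, produces an invariant orientation. That orientation descends to orient $\Sigma$, contradicting the non-orientability of $\Sigma$. Hence $\mu_{p,q}$ is non-orientable, which completes the proof.
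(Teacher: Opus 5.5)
Your proof is correct. It follows the paper's outline: minimize mod $2$ in $L(p,q)$ with boundary $\pi_{p,q}(C)$, use Lemma \ref{disc} to make the quotient surface non-orientable, then lift. Two steps are handled differently.

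\emph{Boundary regularity.} The paper simply cites Hardt's boundary regularity for mod $2$ minimizers. You sketch it instead: boundary tangent cones of a two-dimensional mod $2$ minimizer are unions of an odd number of half-planes along the boundary line, only a single multiplicity-one half-plane is minimizing, and Allard's boundary theorem then applies. This is sound up to standard details, such as ruling out extra planes in the cone.

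Your fallback remark is not justified as stated. The claim that mod $2$ and oriented minimizers ``coincide locally'' near $\gamma$ presupposes local orientability at the boundary, which is essentially what is being proved; the Lawson cone shows that mod $2$ minimizers need not be locally orientable at boundary points in higher dimensions. Rely on the tangent-cone argument or on the citation instead.

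\emph{Non-orientability of the lift.} The paper combines two facts:
\begin{enumerate}
\item the Hardt--Simon theorem (Theorem \ref{orientable}): an orientable minimal filling of $C$ is a hemisphere;
\item a disk cannot finitely cover a non-orientable surface.
\end{enumerate}
Your argument is purely topological. The orientation character of the deck group restricted to the lift is a homomorphism $\mathbb{Z}_p\to\mathbb{Z}_2$, which is trivial since $p$ is odd. So an orientation of the lift would descend and orient $\Sigma$, a contradiction. This avoids the Hardt--Simon rigidity theorem entirely and shows exactly where the oddness of $p$ is used a second time.

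The paper's route does not use the parity of $p$ at this step, and would rule out an orientable lift for any covering degree. The price is a deep classification result that uses the minimality of the lift. For the statement at hand, your more elementary argument is fully sufficient.
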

\begin{proof}
Consider the circle \begin{equation}C_{p,q}:=\pi_{p,q}(C),\end{equation} which is a closed geodesic in $L(p,q)$ of length $2\pi/p$ which generates the fundamental group of $L(p,q)$.  By a result of R. Hardt (\cite{Hardt}), there exists a smoothly embedded area-minimizing mod $2$ flat chain $\Sigma_{p,q}\subset L(p,q)$ with boundary equal to $C_{p,q}$.  Since $C_{p,q}$ represents a generator of $\pi_1(L(p,q))$, it follows from Lemma \ref{disc} that $\Sigma_{p,q}$ is a non-orientable minimal surface. Its lift
\begin{equation}
\mu_{p,q}:=\pi^{-1}_{p,q}(\Sigma_{p,q}), 
\end{equation}
is an embedded minimal surface in $\mathbb{S}^3$ with boundary the great circle $C\subset\mathbb{S}^3$.  By a result of Hardt-Simon (Theorem \ref{orientable}), $\mu_{p,q}$ is either non-orientable or a hemisphere.  
Since disks cannot cover non-orientable surfaces, it follows that $\mu_{p,q}$ is also non-orientable.  

\end{proof}

We show that suitable minimal surfaces $\tilde{\xi}_{k,m}$ indeed descend to the lens spaces $L(p,q)$:

\begin{prop}\label{crosscap}
Let $p$ and $q$ be relatively prime integers with $1\leq q<p$ and $p$ odd with $p\geq 3$. The following hold: 
\begin{enumerate}
    \item If $q$ is even, then the surface $\pi_{p,q}(\tilde{\xi}_{p,q/2-1})$ is a minimal embedding in $L(p,q)$ with boundary $C_{p,q}$.
    \item If $q$ is odd, then the surface $\pi_{p,q}(c_2(\tilde{\xi}_{p,(p-q)/2-1}))$ is a minimal embedding in $L(p,q)$ with boundary $C_{p,q}$.
\end{enumerate}
\end{prop}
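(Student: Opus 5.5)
The plan is to show directly that the relevant $\tilde{\xi}$ is invariant under the $(p,q)$-action. Then, because that action is free and isometric, the quotient is automatically an embedded minimal surface whose boundary is $\pi_{p,q}(C)=C_{p,q}$. I read the subscript in $\tilde{\xi}_{p,q/2-1}$ as fixing the odd parameter $m=p$ and the other parameter $k=q/2-1$, so that $2(k+1)=q$. Since $p\geq 3$ is odd, Theorem \ref{main} applies with $m=p$. The degenerate value $k=0$, when $q=2$, is treated the same way provided the construction of Section \ref{mainsection} goes through for $k=0$; I will check this, since all the arguments there appear to use only $m\ge 3$.

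\emph{Case 1: $q$ even.} The key input is \eqref{goodelement}. The element
\begin{equation*}
g_3=\sigma_{\frac{2\pi\cdot 2(k+1)}{m(k+1)}}\circ\rho_{\frac{2\pi}{m(k+1)}}
\end{equation*}
preserves $\tilde{\xi}_{k,m}$, and it is the value at $\theta=\frac{2\pi}{m(k+1)}$ of the circle action \eqref{action}, $e^{i\theta}(z,w)=(e^{i\theta}z,e^{2i(k+1)\theta}w)$. Consequently the power $g_3^{k+1}$, which corresponds to $\theta=\frac{2\pi}{m}=\frac{2\pi}{p}$, also preserves $\tilde{\xi}_{k,m}$. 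Writing it out,
\begin{equation*}
g_3^{k+1}(z,w)=\bigl(e^{2\pi i/p}z,\;e^{2\pi i\cdot 2(k+1)/p}w\bigr)=\bigl(e^{2\pi i/p}z,\;e^{2\pi i q/p}w\bigr)=\xi_{p,q}(z,w).
\end{equation*}
So $\tilde{\xi}_{q/2-1,p}$ is invariant under the whole cyclic group generated by $\xi_{p,q}$. Since $\gcd(p,q)=1$, this group acts freely on $\mathbb{S}^3$ by isometries. The restriction of $\pi_{p,q}$ to $\tilde{\xi}$ is therefore a covering onto its image, and the image is a smooth embedded surface. It is minimal because $\pi_{p,q}$ is a local isometry. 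Moreover $C$ is invariant under the action, so the boundary of the image is $\pi_{p,q}(C)=C_{p,q}$.

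\emph{Case 2: $q$ odd.} Now $p-q$ is even and positive, and I take $k=(p-q)/2-1$ and $m=p$. Recall $c_2(z,w)=(z,\overline{w})$, so $c_2$ conjugates the circle action \eqref{action} to $(e^{i\theta}z,e^{-2i(k+1)\theta}w)$. Hence $c_2(\tilde{\xi}_{k,p})$ is invariant under
\begin{equation*}
(z,w)\mapsto\bigl(e^{2\pi i/p}z,\;e^{-2\pi i(p-q)/p}w\bigr)=\bigl(e^{2\pi i/p}z,\;e^{2\pi iq/p}w\bigr)=\xi_{p,q}(z,w).
\end{equation*}
The map $c_2$ is an isometry of $\mathbb{S}^3$ that fixes $C$ pointwise. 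It is orientation reversing, but that is harmless for minimality and embeddedness. So $c_2(\tilde{\xi}_{k,p})$ is again an embedded minimal surface with boundary $C$, and the argument of Case 1 applies verbatim.

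The main obstacle is bookkeeping rather than analysis. First, I must confirm that $\frac{2\pi}{p}$ really lies in the $\mathbb{Z}_{m(k+1)}$ subgroup of the circle action; this holds because $p=m$ divides $m(k+1)$. Second, I must check that embeddedness passes to the quotient. For that I would argue as follows. An invariant embedded surface on which a group acts freely has an injective immersed image in the quotient. The boundary of the image stays a single curve, $C_{p,q}$, because the action restricted to $C$ is the free rotation by $2\pi/p$.
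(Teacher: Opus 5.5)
Your proposal is correct and is essentially the paper's proof. Both arguments use the invariance under $g_3^{k+1}=\xi_{p,q}$ from \eqref{goodelement}, conjugate by $c_2$ to turn the $(p,p-q)$-action into the $(p,q)$-action when $q$ is odd, and use freeness of the $\mathbb{Z}_p$-action to pass to an embedded quotient; you just spell out the freeness and embeddedness checks the paper leaves implicit.

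The $k=0$ cases you flag ($q=2$, and $q=p-2$ when $q$ is odd) are passed over silently in the paper. There the conclusion holds at least for the Lawson band: $\overline{\tau}_{1,2}$ is invariant under the whole circle action \eqref{action} with $k=0$, hence under $\xi_{p,2}$. Likewise $c_2(\overline{\tau}_{1,2})=\overline{\tau}_{1,-2}$ is invariant under the conjugate circle action $(e^{i\theta}z,e^{-2i\theta}w)$, hence under $\xi_{p,p-2}$.
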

\begin{proof}
Recall from \eqref{goodelement} that $\tilde{\xi}_{k,m}$ is invariant under the group of isometries generated by
\begin{equation}
g_3=\sigma_{\frac{4\pi}{m}}\circ\rho_{\frac{2\pi}{m(k+1)}}.
\end{equation}
Thus the surface $\tilde{\xi}_{k,m}$ is also invariant under the element
\begin{equation}
g_3^{k+1}=\sigma_{\frac{2\pi(2k+2)}{m}}\circ\rho_{\frac{2\pi}{m}}, 
\end{equation}
which generates the $(m,2(k+1))$-action on $\mathbb{S}^3$ when $2(k+1)$ is relatively prime to $m$. Thus if $q$ is even, $\tilde{\xi}_{p,q/2-1}$ indeed descends to $L(p,q)$ as an embedded minimal surface.  Moreover, we have from item 
(\ref{genus}) in Theorem \ref{main} that the genus of $\tilde{\xi}_{p,q/2-1}$ is $pq/2-p+1$.  Since
\begin{equation}
\chi(\tilde{\xi}_{p,q/2-1})=p\chi(\pi_{p,q}(\tilde{\xi}_{p,q/2-1})), 
\end{equation}
and the Euler characteristic of a non-orientable surface with one boundary component and $r$ cross-caps added to a disk is $1-r$ we get that
\begin{equation}
p-pq/2 = p\chi(\pi_{p,q}(\tilde{\xi}_{p,q/2-1})).
\end{equation}
Thus we obtain
\begin{equation}
\mbox{genus}(\pi_{p,q}(\tilde{\xi}_{p,q/2-1}))=q/2.
\end{equation}

Let us now address item (2), the case when $q$ is odd.  Observe that by the preceding discussion, the surface $c_2(\tilde{\xi}_{k,m})$ (recalling Section \ref{symmetries}) is invariant under the $(m,m-2(k+1))$-action. Thus, if $q$ is odd, $c_2(\tilde{\xi}_{p,(p-q)/2-1})$ descends to the lens space $L(p,q)$.  The computation of the genus is analogous to the even case.
\end{proof}
\begin{rmk}
Recalling \eqref{secondconj}, observe that the surface $c_2(\tilde{\xi}_{k,m})$ has opposite chirality to $\tilde{\xi}_{k,m}$ in the sense that their boundary slopes differ by a sign.
\end{rmk}

\begin{rmk}\label{coincide}
It is natural to expect that $\mu_{p,q}=\pi_{p,q}(\tilde{\xi}_{p,q/2-1})$ for $q$ even and $\mu_{p,q}=\pi_{p,q}(c_2(\tilde{\xi}_{p,(p-q)/2-1}))$ for $q$ odd.  One can minimize area in the homotopy class of the surfaces posited in Proposition \ref{crosscap} to obtain a (possibly immersed) minimal surface with boundary $C_{p,q}$ realizing this genus. Since such minimizers are homotopic to embeddings, in light of the work of Freedman-Hass-Scott \cite{FHS}, one expects that the homotopy minimizers indeed coincide with the geometric measure theory minimizer $\mu_{p,q}$ (and also suitable $\tilde{\xi}_{k,m}$ surfaces).  Note that the minimization procedure is necessarily not conducted in a mean convex domain of $\mathbb{S}^3$.
\end{rmk}

\section{Admissible genera}\label{admissiblesection}
In this section we collect results of Whitney and Massey that give the possible combinations of genera and Euler numbers can be realized by smooth embeddings with boundary a great circle. 

If $\Sigma$ is a non-orientable embedded surface of genus $g$ in $\mathbb{S}^3$ with boundary an unknot $K$, then capping it off with a disk in $B^4$ we obtain a closed non-orientable surface $\tilde{\Sigma}$ embedded in $\mathbb{R}^4$ with  $\chi(\tilde{\Sigma})=2-g$.  

Whitney \cite{Whitney} proved that the normal Euler number $e(\tilde{\Sigma})=e(\Sigma)$ (see Section 2 in \cite{Conway} for this equality or \cite[1.1.1]{Yasuhara}) and Euler characteristic $\chi(\tilde{\Sigma})$ obey the following relation 
\begin{equation}
e(\tilde{\Sigma})=2\chi(\tilde{\Sigma}) \;\;\;\mbox{   mod } 4, 
\end{equation}
which gives
\begin{equation}
e(\Sigma)=-2g \;\;\;\mbox{   mod } 4.
\end{equation}

We deduce the following:

\begin{lemma}\label{parity}
If $\Sigma\subset \mathbb{S}^3$ is a non-orientable embedded surface with boundary an unknot with $e(\Sigma)=2k$, then the genus of $\Sigma$ has the same parity as the integer $k$.
\end{lemma}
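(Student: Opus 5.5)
The argument is a direct specialization of the Whitney congruence recorded just before the statement. We cap off $\Sigma$ by a disk in $B^4$ to obtain a closed non-orientable surface $\tilde{\Sigma}\subset\mathbb{R}^4$. The general facts we use are that $\chi(\tilde{\Sigma})=2-g$, where $g$ is the genus of $\Sigma$ (that is, the number of crosscaps), and that $e(\tilde{\Sigma})=e(\Sigma)$. With these, Whitney's relation $e(\tilde{\Sigma})\equiv 2\chi(\tilde{\Sigma}) \pmod 4$ becomes
\begin{equation*}
e(\Sigma)\equiv 4-2g\equiv -2g \pmod 4 .
\end{equation*}

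We now substitute $e(\Sigma)=2k$, which gives $2k\equiv -2g \pmod 4$, so that $4$ divides $2(k+g)$. Hence $k+g$ is even, and $g\equiv -k\equiv k \pmod 2$, which is exactly the claim.

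The only point needing care is the bookkeeping of conventions. First, the genus of a non-orientable surface with one boundary component is its crosscap number, consistent with the usage $\chi=1-\mbox{genus}$ in the proof of Theorem \ref{main}; capping with a disk then raises $\chi$ by one, giving $\chi(\tilde{\Sigma})=2-g$. Second, the Euler number in the paper's sense agrees with the normal Euler number of the capped surface, so that Whitney's theorem applies; this is cited from \cite{Conway} and we take it as given. The sign convention for $e$ is irrelevant here, since replacing $2k$ by $-2k$ does not change the parity of $k$. No further obstacle is expected. As a consistency check, for the surfaces $\tilde{\xi}_{k,m}$ of Theorem \ref{main} we have $e=2(k+1)$ and genus $mk+1$, and with $m$ odd both $mk+1$ and $k+1$ have the parity of $k+1$, as the lemma requires.
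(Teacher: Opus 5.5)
Your proposal is correct and is essentially the paper's argument. Capping off gives $\chi(\tilde{\Sigma})=2-g$, Whitney's congruence then yields $e(\Sigma)\equiv -2g \pmod 4$, and substituting $e(\Sigma)=2k$ gives $k\equiv g \pmod 2$; the paper states the lemma as an immediate deduction from that congruence, so your explicit parity step and convention checks just fill in what it leaves implicit.
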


Massey \cite{Ma} -- see also \cite[Corollary 1.1.1]{Yasuhara} -- later proved that 
\begin{equation}\label{options}
e(\Sigma)\in \{-2g,-2g+4,..., 2g-4, 2g\}.
\end{equation}

We obtain: 
\begin{thm}[Admissible genera \cite{Whitney,Ma}]\label{admissible}
There exists a smooth embedded non-orientable surface $\Sigma$ with boundary an unknot in $\mathbb{S}^3$ if and only if
\begin{equation}\label{adgen}
\mbox{genus}(\Sigma)=\frac{e(\Sigma)}{2}+2n\mbox{ where } n\in\mathbb{N}\cup\{0\}.
\end{equation}
In particular for such a surface $\Sigma$,
\begin{equation}
\mbox{genus}(\Sigma)\geq\frac{e(\Sigma)}{2}.
\end{equation}
\end{thm}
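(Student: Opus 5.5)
Both directions come from the two constraints just recalled. Necessity: given $\Sigma$ non-orientable of genus $g$ with unknotted boundary, Massey's theorem \eqref{options} gives $e(\Sigma)=-2g+4j$ for some $0\le j\le g$, hence $|e(\Sigma)|\le 2g$. Lemma \ref{parity} (the Whitney congruence) gives $g\equiv e(\Sigma)/2 \pmod 2$. Together these say $g-e(\Sigma)/2$ is a non-negative even integer, i.e. \eqref{adgen} with $n\ge 0$. This assumes $e(\Sigma)\ge 0$, the case relevant to the surfaces of Theorem \ref{main}. In general one should read $|e(\Sigma)|$ in place of $e(\Sigma)$, the negative case following by applying $c_2$, which reverses the sign of the Euler number by \eqref{secondconj}. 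The inequality $\mathrm{genus}(\Sigma)\ge e(\Sigma)/2$ is then immediate.

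Sufficiency: for every even $e=2k\ge 0$ and every $n\ge 0$ I would exhibit a smooth embedded surface of genus $k+2n$ with Euler number $2k$. The plan is to build the minimal-genus examples first and then add handles.

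\emph{Building blocks.} A standard embedded Möbius band with unknotted boundary has $e=\pm 2$; the Lawson band $\overline{\tau}_{1,2}$ provides $e=2$, and $c_2(\overline{\tau}_{1,2})$ provides $e=-2$. For the minimal genus $k$ with $e=2k$, take the boundary connected sum of $k$ copies of the $e=2$ band, arranged in disjoint balls and joined by bands along the boundary. Boundary connected sum preserves unknottedness, adds genera, and adds Euler numbers, since the slope is a framing count computed locally near the boundary. For $k=0$ take a disk, which has genus $0$. To raise the genus by $2$ without changing $e$, boundary-connect-sum with one copy of the $e=+2$ band and one copy of the $e=-2$ band; equivalently, one may attach an orientable handle, which non-orientably counts as two crosscaps. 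Iterating $n$ times gives genus $k+2n$.

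The main point to check carefully is the additivity of the Euler number under boundary connected sum, and its invariance under attaching a handle. I would verify this using the definition via the $(1,e)$ torus knot on $\partial T(C)$: the linking of the pushoff with the boundary is additive under band sum of unknots. Whitney's identification $e(\tilde\Sigma)=e(\Sigma)$ gives an alternative check in $\mathbb{R}^4$, where normal Euler numbers add under connected sum. I would finish by confirming that all the resulting surfaces are non-orientable, and handle the $k=0$, $n\ge1$ case with a $(+2)$ band and a $(-2)$ band, each summed in $n$ times.
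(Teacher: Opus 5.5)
Your proposal is correct. The necessity half is the paper's argument: Massey's list \eqref{options} together with Lemma \ref{parity}.

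Your sufficiency construction, however, takes a different route. The paper builds the extremal surface (genus $k$, Euler number $2k$) by modifying the immersed Lawson band $\overline{\tau}_{1,2k}$ only inside the solid torus $T_{\pi/4}(C^*)$. There it desingularizes the $k$-sheeted crossing along $C^*$. In each meridian disk the $2k$ spokes are resolved in one of the two ways of joining adjacent spokes. This resolution is rotated as one goes once around $C^*$, so that it returns with a $\pi/k$ twist, resolved the other way.

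Because nothing changes near $C$, the paper gets $e=2k$ for free from $\overline{\tau}_{1,2k}$. Its remaining work is an Euler characteristic count giving genus $k$, which the paper leaves to the reader.

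You instead take a boundary connected sum of $k$ embedded $e=+2$ bands such as $\overline{\tau}_{1,2}$. The genus count is then immediate, since $\chi$ drops by one per band sum. The work shifts to additivity of the boundary slope, and your linking-number argument handles this correctly: use an untwisted connecting band and a split diagram.

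Your route is more elementary and standard. It also covers cases the paper's proof skips:
\begin{enumerate}
\item $e=0$, where you correctly require $n\ge 1$ because the disk is orientable;
\item $e<0$, via the reflection $c_2$, where the statement as written must be read with $|e(\Sigma)|$.
\end{enumerate}
The paper's route, by contrast, shows that the genus-minimizing configuration can be taken to agree with $\overline{\tau}_{1,2k}$ away from $C^*$, i.e.\ as a desingularization of a Lawson band. That is the picture behind its later conjectures on whether such configurations can be realized minimally.

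One small point concerns placement. Your caveat about $e(\Sigma)\ge 0$ is not needed for necessity, since Massey's bound $e\le 2g$ already gives $g-e/2\ge 0$ for either sign. It is the sufficiency direction of the literal statement that fails for $e<0$: for example, $e=-4$, $n=1$ would give genus $0$.
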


\begin{proof}
Lemma \ref{parity} and \eqref{options} establish the necessity of \eqref{adgen} for such a surface $\Sigma$ to exist.  Given $k\geq 1$, we will show on the other hand that there exists a smooth embedded surface of genus $k$ and Euler number $2k$ and boundary an unknot.  Since one can then add $n$ handles to this surface (which increases the genus by $2n$ but does not change the Euler number), this shows the sufficiency of \eqref{adgen}.

Let us construct the desired surface in our setting.  Fix $k\geq 1$. Consider the immersed band $\overline{\tau}_{1,2k}$ restricted to a neighborhood $T_{\pi/4}(C^*)$ of the curve $C^*$ where it fails to be an embedding.  The solid torus $T_{\pi/4}(C^*)$ is foliated by the meridian disks $\{H_\theta\cap  T_{\pi/4}(C^*)\}_{t\in [0,2\pi]}$.  The cross sections \begin{equation}F_\theta = H_\theta\cap T_{\pi/4}(C^*)\cap \overline{\tau}_{1,2k}\end{equation} consist of $2k$ equally spaced geodesic segments meeting at the center point $H_\theta\cap C^*$ of $H_\theta$.  

For $\delta$ small enough, let $B_\delta\subset H_0$ be the geodesically convex ball of radius $\delta$ in $H_0$ based at its center.  Then 
\begin{equation}
(\bigcup_{j=1}^{2k} \gamma_{0,\frac{2\pi j}{k}})\cap \partial B_\delta = p_1(\delta)\cup...\cup p_{2k}(\delta), 
\end{equation}
where $p_1(\delta),...,p_{2k}(\delta)$ are equally spaced points on $\partial B_\delta$ varying continuously with $\delta$.
Let $g_i(\delta)$ denote the unique minimizing geodesic segment in $H_0$ between $p_i(\delta)$ and $p_{i+1}(\delta)$ (where $g_{2k}(\delta)$ is the unique minimizing geodesic between $p_{2k}(\delta)$ and $p_1(\delta)$).  Note that the interiors of the segments $\{g_i(\delta)\}_{i=1}^{2k}$ are pairwise disjoint since they are minimizing, the segments vary continuously in $\delta$, and are contained in $B_\delta$.

Let us denote the piecewise smooth geodesic contour:
\begin{equation}
G(\delta) = \bigcup_{i=1}^{2k} g_i(\delta).
\end{equation}

For $\epsilon<\frac{\pi}{4}$, let  $\{\Psi_t\}_{t\in [0,\epsilon]}\subset F_0$ be the family of curves given by 
\begin{equation}
\Psi_t = G(t)\cup \bigcup_{j=1}^{2k} \gamma_{0,\frac{2\pi j}{k}}\cap (F_0\setminus B_t) 
\end{equation}

Let us extend $\{\Psi_t\}_{t\in [0,\epsilon]}$ to a family $\{\Gamma_t\}_{t\in [0,2\pi]}$ also contained in the disk $F_0$ by setting
\begin{equation}
\Gamma_t = 
\begin{cases} 
  \Psi_t & \text{if } t\in [0,\epsilon]\\ 
  
  \Psi_\epsilon& \text{if } t\in [\epsilon,2\pi-\epsilon]\\
  \Psi_{2\pi-t} & \text{if } t\in [2\pi-\epsilon,2\pi]. \\
\end{cases}
\end{equation}
Note that for all $t\in [0,2\pi]$
\begin{equation}\label{welldefined}
\Gamma_t\cap \partial T_{\pi/4}(C^*) = F_0\cap \partial T_{\pi/4}(C^*) =\overline{\tau}_{1,2k}\cap \partial T_{\pi/4}(C^*).
\end{equation} Then define an embedded (piecewise smooth) surface by setting
\begin{equation}
S_k=\bigcup_{\theta\in [0,2\pi]}\sigma_\theta \circ \rho_{\frac{\theta}{2k}}(\Gamma_\theta).
\end{equation}
Roughly speaking, the cross sections $S_k\cap F_\theta$ first desingularize the $2k$ segments comprising $F_0$ in one of the two possible ways, then rotate the result around $C^*$ so that it comes back with a $\frac{\pi}{k}$ twist desingularizing $F_0$ in the other possible way. 

Finally we define the desired surface with boundary $C$ (in light of \eqref{welldefined}) by:
\begin{equation}
G_k = (\overline{\tau}_{1,k}\cap (\mathbb{S}^3\setminus T_{\pi/4}(C^*)))\cup S_k.
\end{equation}
Since $S_k$ agrees with $\overline{\tau}_{1,2k}$ in a neighborhood of $C$ it follows that \begin{equation}e(S_k)=e(\overline{\tau}_{1,2k})=2k.\end{equation}  
One can readily compute that the genus of $S_k$ is $k$.
\end{proof}

\section{Symmetry of surfaces with boundary a great circle}\label{symmetrysection}

It is a difficult problem to classify the closed embedded minimal surfaces in $\mathbb{S}^3$.  The large six dimensional symmetry group $SO(4)$ of $\mathbb{S}^3$ and equivariant min-max existence theory for each subgroup implies for instance the existence of a plethora of examples.  Indeed, for each subgroup of $SO(4)$ one should obtain a sequence of minimal surfaces realizing the equivariant $p$-widths.

On the other hand, the subgroup $\mathcal{G}\subset SO(4)$ of isometries that preserve the great circle $C$ (set-wise) is much smaller.   Indeed, if $M\in SO(4)$ preserves $C$ then it preserves the $2$-plane $P=\{(z,0)\;|\;z\in\mathbb{C}\}\subset\mathbb{R}^4$.  Since $M$ is orthogonal, it also preserves the orthogonal $2$-plane $P^\perp =\{(0,w)\;|\;w\in\mathbb{C}\}$.  It follows that any such map is block diagonal with respect to $P\oplus P^\perp$.  In other words, 
\begin{equation}
M =
\begin{pmatrix}
M_1 & 0 \\
0 & M_2
\end{pmatrix},
\end{equation}
where $M_1,M_2\in O(2)$ and $\mbox{det}(M_1)\mbox{det}(M_2)=1$.
Thus
\begin{equation}
\mathcal{G}=\mbox{Stab}_{SO(4)}(C) = S(O(2)\times O(2)).
\end{equation}
The finite subgroups of the two dimensional group $\mathcal{G}$ are easy to classify as either $\mathbb{Z}_n$, $\mathbb{D}_n$, $\mathbb{Z}_n\oplus\mathbb{Z}_m$, or $(\mathbb{Z}_n\oplus\mathbb{Z}_m)\rtimes\mathbb{Z}_2$.  As $\mathbb{Z}_{nm}\simeq \mathbb{Z}_n\oplus \mathbb{Z}_m$ when $\gcd(n,m)=1$, one may assume that $n$ and $m$ have a common factor greater than $1$ in the third case and, for similar reasons, in the fourth case.

We show that the latter two cases cannot occur as symmetry groups for any smoothly embedded (let alone minimal) surface with boundary a great circle:
\begin{prop}\label{symmetry}
There is no smoothly embedded surface $\Sigma\subset\mathbb{S}^3$ with boundary $C$ whose isometry group contains a subgroup isomorphic to $\mathbb{Z}_n\oplus\mathbb{Z}_m$ (with $n$ and $m$ sharing a common factor greater than $1$).
\end{prop}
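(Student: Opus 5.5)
The plan is to reduce to a single prime and then exhibit a nontrivial isometry in the group that fixes $C$ pointwise. Such an isometry is incompatible with $\Sigma$ having a well-defined inward conormal along $C$.

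\textbf{Reduction to $\mathbb{Z}_p\oplus\mathbb{Z}_p$.} Let $p$ be a prime dividing both $n$ and $m$. Then $\mathbb{Z}_n\oplus\mathbb{Z}_m$ contains a copy of $\mathbb{Z}_p\oplus\mathbb{Z}_p$, and it suffices to rule out that group. The isometries preserving $\Sigma$ must preserve $\partial\Sigma=C$, so by the discussion above they lie in $\mathcal{G}=S(O(2)\times O(2))$ and are block diagonal, $M=\mathrm{diag}(M_1,M_2)$.

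\textbf{Main case: the subgroup lies in $SO(2)\times SO(2)$.} This holds automatically when $p$ is odd. An element of odd order has $\det M_1=\det M_2=1$, since a block with determinant $-1$ is a reflection of order $2$, and $2$ cannot divide the order of $M$. Now $SO(2)\times SO(2)$ is a torus, and its full $p$-torsion subgroup is exactly $\mathbb{Z}_p\oplus\mathbb{Z}_p$. Hence our subgroup equals $\{(\rho_{2\pi a/p},\sigma_{2\pi b/p})\}$, and in particular it contains $\sigma_{2\pi/p}$, which fixes $C$ pointwise.

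\textbf{Key geometric step.} Since $\Sigma$ is smoothly embedded up to its boundary, at each point $x\in C$ it has a unique inward unit conormal $\nu(x)$. This vector lies in the normal plane of $C$ at $x$, which is spanned by $(0,1)$ and $(0,i)$. If an isometry $g$ preserves $\Sigma$ and fixes $x$, then $dg_x$ preserves $T_x\Sigma$ and the inward side, so $dg_x\,\nu(x)=\nu(x)$. But $d(\sigma_{2\pi/p})_x$ acts on this normal plane as rotation by $2\pi/p\neq 0\pmod{2\pi}$, so it fixes no nonzero vector. This is a contradiction. The same argument applies to any nonidentity element of the form $(I,M_2)$ with $M_2$ a nontrivial rotation.

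\textbf{Expected obstacle: $p=2$ with reflections.} The step I expect to be delicate is $p=2$ when the subgroup is not contained in $SO(2)\times SO(2)$. First, if some nonidentity element has $M_1=I$, then $\det M_2=1$ forces $M_2=-I$. That element is $\sigma_\pi$, which is excluded by the conormal argument. So we may assume the projection $M\mapsto M_1$ is injective. Its image is then a Klein four-group $\{I,-I,r,-r\}\subset O(2)$ with $r$ a reflection. I would then track the fixed points of $(r,M_2)$ on $C$ and compare the conormals there, using the $(-I,\cdot)$ element to transport between antipodal points.

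However, I suspect this case genuinely fails as stated. The hemisphere $H_0$ is invariant under both $(z,w)\mapsto(\bar z,\bar w)$ and $(z,w)\mapsto(-z,w)$, which generate a $\mathbb{Z}_2\oplus\mathbb{Z}_2$. So the first thing I would try is to make precise the intended reading: that the $\mathbb{Z}_n\oplus\mathbb{Z}_m$ in question sits inside the rotational part $SO(2)\times SO(2)$, as in the classification list above. In that setting the torus-torsion argument settles every prime, including $p=2$, because the $2$-torsion is $\{(\pm I,\pm I)\}\ni(I,-I)=\sigma_\pi$.
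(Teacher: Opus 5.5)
Your counterexample is correct, and it exposes a real flaw in the paper's own proof. The paper restricts the whole group to $C$ and argues that, because finite subgroups of $O(2)$ are cyclic or dihedral, the restriction cannot be injective. But $\mathbb{Z}_2\oplus\mathbb{Z}_2\cong\mathbb{D}_2=\{\pm I,\pm r\}\subset O(2)$, and your hemisphere realizes exactly this injective case.

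The paper's own surfaces do so as well:
\begin{itemize}
\item The Lawson band $\overline{\tau}_{1,2}$ is preserved by the conjugation $c$, and by $\rho_\pi$ because $z^2\bar w$ is unchanged under $z\mapsto -z$.
\item For $k$ odd, $\tilde{\xi}_{k,m}$ is preserved by $g_3^{m(k+1)/2}=\rho_\pi$ and $\iota$, which generate a Klein four-group inside $\mathbb{D}_{m(k+1)}$.
\end{itemize}

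In the reading you propose, where the group lies inside $SO(2)\times SO(2)$, your argument is complete. It differs only cosmetically from the paper's. The paper obtains a nontrivial kernel element of the restriction to $C$ because finite subgroups of $SO(2)$ are cyclic. You instead exhibit $\sigma_{2\pi/p}$ directly as $p$-torsion. Both arguments then finish with the same conormal-invariance step.

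Where your proposal falls short is the opening reduction to $\mathbb{Z}_p\oplus\mathbb{Z}_p$, which discards information at $p=2$. Read literally, with reflection blocks allowed, the statement fails only for $(n,m)=(2,2)$. The reason is that a finite abelian non-cyclic subgroup of $O(2)$ must be an abelian dihedral group, hence the Klein four-group. So for any $\mathbb{Z}_n\oplus\mathbb{Z}_m$ with $\gcd(n,m)>1$ and $nm>4$, the restriction of the \emph{whole} group to $C$ has a nontrivial kernel. Any such kernel element must be some $\sigma_\alpha$ with $\alpha\not\equiv 0$, and your conormal argument excludes it. This covers $\mathbb{Z}_2\oplus\mathbb{Z}_4$, $\mathbb{Z}_4\oplus\mathbb{Z}_4$, $\mathbb{Z}_2\oplus\mathbb{Z}_6$, and every other case where $2$ is the only common prime.

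Once you pass to the $2$-torsion subgroup, you land in the genuinely false $\mathbb{Z}_2\oplus\mathbb{Z}_2$ case and cannot conclude. That is why your plan wrongly presents the entire $p=2$ reflection case as a failure of the statement. The fix is to keep the whole group rather than a prime-torsion subgroup.

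The accurate picture is:
\begin{itemize}
\item the literal statement holds exactly when $(n,m)\neq(2,2)$, by the whole-group restriction argument;
\item the torus version holds for all $(n,m)$.
\end{itemize}
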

\begin{proof}
Suppose there exists such a surface $\Sigma$.  Consider the restriction map
\begin{equation}
\rho: \mathbb{Z}_n\oplus\mathbb{Z}_m\to \mbox{Isom}(C)\cong O(2).
\end{equation}
The only subgroups of $O(2)$ are cyclic and dihedral groups.  Thus $\rho$ is not an injection.  If $a\in\mbox{Ker}(\rho)$ and $a\neq (e,e)$, then
\begin{equation}
a(z,w) = (z,e^{i\alpha}w)\mbox{ for some } \alpha\in\mathbb{R}.
\end{equation}
Since $a$ fixes $C$ pointwise and $\Sigma$ is embedded, the conormal vector to $\Sigma$ at any point along $C$ is invariant under $a$ as well.  It follows that $\alpha =0$ and $a=(e,e)$, a contradiction.
\end{proof}

It follows from Proposition \ref{symmetry} that the only possible $SO(4)$ isometry groups of a minimal surface with boundary $C$ are cyclic groups $\mathbb{Z}_p$ (subgroups of the isometry group of the $\mu_{p,q}$ surfaces) and dihedral groups $\mathbb{D}_q$ (which occur for the $\tilde{\xi}_{k,m}$ surfaces).  Of course, as described in Remark \ref{coincide}, we expect the two families to coincide and thus that the $\mu_{p,q}$ surfaces have an extra involutive symmetry. It is thus a natural question whether the $\tilde{\xi}_{k,m}$ surfaces and $\overline{\tau}_{1,2}$ are the only embedded minimal equatorial fillings in $\mathbb{S}^3$ (up to ambient isometry).

\section{Questions}\label{questionsection}
The following is a natural analog to the Lawson Conjecture (resolved by S. Brendle \cite{B}):
\begin{conj}
The Lawson M\"obius band is the \emph{unique} embedded minimal M\"obius band with boundary a great circle.  
\end{conj}
The existence of the immersed bands $\bar{\tau}_{1,2k}$ for $k>1$ shows the necessity of embeddedness.  

We have the following non-orientable analog to R. Kusner's  long-standing conjecture for the Lawson surfaces $\xi_{1,g}$ in $\mathbb{S}^3$:
\begin{conj}
For each odd $m\in\mathbb{N}$ with $m\geq 3$, the surface $\tilde{\xi}_{1,m}$ minimizes area and Willmore energy among non-orientable minimal surfaces with boundary a great circle and genus $m+1$.
\end{conj}

It is natural to ask which other simple topological types and Euler numbers can occur:

\begin{question}\label{punctured}
There is no minimal embedded punctured Klein bottle with boundary a great circle. 
\end{question}

Note that by Theorem \ref{admissible} any counterexample to Conjecture \ref{punctured} would necessarily have Euler number $\pm 4$ or $0$.

\begin{conj}\label{2}
The Lawson M\"obius band is the \emph{unique} embedded non-orientable minimal surface with boundary a great circle and with Euler number $\pm 2$.  
\end{conj}

Note that any odd genus in Conjecture \ref{2} is consistent with Theorem \ref{admissible}.  

\begin{conj}\label{0}
A great circle in $\mathbb{S}^3$ bounds no non-orientable embedded minimal surface with Euler number $0$.
\end{conj}
Note that any even genus in Conjecture \ref{0} is consistent with Theorem \ref{admissible}.  
More generally, one can ask whether the lower bound in Theorem \ref{admissible} is saturated:
\begin{conj}
There exists no embedded minimal surface (aside from the Lawson M\"obius band) with boundary a great circle and genus equal to half of its Euler number.
\end{conj}
If $\Gamma$ is a simple closed curve in a $3$-manifold, the \emph{crosscap genus} of $\Gamma$ is the infimal genus of an embedded non-orientable surface $\Sigma$ bounded by $\Gamma$.  
\begin{question}
Is the cross-cap genus of the projection of $C$ to $L(p,q)$ realized by the particular $\tilde{\xi}_{k,m}$ surfaces enumerated in Proposition \ref{crosscap}?  
\end{question}

In higher dimensions, it is natural to ask if \emph{any} minimal equatorial fillings objects exist (note that only in dimension $3$ is a smooth example forced to exist):
\begin{question}
Do there exist non-orientable minimal equatorial fillings in higher dimensional spheres aside from the (singular) iterated suspensions of the ones known in $\mathbb{S}^3$?  Are there any \emph{smoothly embedded} such fillings?
\end{question}

There is a relationship between minimal surfaces in $\mathbb{S}^3$ and self-shrinkers of the mean curvature flow (MCF), see \cite{Bern}, and \cite{KZ} for more discussion.  White \cite{White3pi} showed that there is an embedded self-shrinking M\"obius band with boundary a line in $\mathbb{R}^3$ which arises as a singularity model for certain MCFs with fixed boundary. Thus it is natural in light of this paper to ask:
\begin{question}
Are there embedded higher genus non-orientable self-shrinkers in $\mathbb{R}^3$ with boundary a line?
\end{question}

Finally, it is natural to ask with regard to enumeration:

\begin{question}
How many minimal surfaces are bounded by a \emph{round} circle in $\mathbb{S}^3$?
\end{question}

\begin{question}
How many minimal surfaces are bounded by a curve which is small enough perturbation of a geodesic in $\mathbb{S}^3$?  
\end{question}

L. Caffarelli, B. Hardt and L. Simon \cite{CHS} showed that any non-equatorial minimal hypersurface in $\mathbb{S}^n$ (for $n\geq 3$) is the singularity model for a non-cone minimal hypersurface in $\mathbb{R}^{n+1}\setminus\{0\}$ with an isolated singularity at the origin.  Thus one can ask:
\begin{question}
Do all minimal equatorial fillings arise as singularity models?
\end{question}

\end{document}